\documentclass[]{amsart}

\usepackage{latexsym}
\usepackage{amssymb,amsmath,amsopn}
\usepackage[dvips]{graphicx}   
\usepackage{color,epsfig}      
\usepackage{url}
\usepackage{amsbsy}

\textwidth=16cm\textheight=24,5cm\parindent =0.5cm\parskip=3pt
\hoffset=-1,9cm \topmargin=-1,5cm
\newtheorem{theorem}{Theorem}[]

\newtheorem{proposition}{Proposition}[]

\theoremstyle{definition}

\newtheorem{remark}[]{Remark}[]

\newtheorem{Pax}{Axiom}

\newtheorem*{statement*}{Statement}

\begin{document}
\title[]{The bridge between Desargues' and Pappus' theorems}
\author[\'A. G.Horv\'ath]{\'Akos G.Horv\'ath}
\address {ELKH-BME Morphodynamics Research Group and Department of Geometry, Institute of Mathematics,
Budapest University of Technology and Economics,
Műegyetem rkp. 3., H-1111 Budapest, Hungary}
\email{ghorvath@math.bme.hu}

\dedicatory{}
\subjclass{51A05,51A20,51A30}
\keywords{Desargues' theorem, Pappus' theorem, plane of incidence, projective plane}

\date{}

\begin{abstract}
In this paper, we investigate the configuration theorems of Desargues and Pappus in a synthetic geometric way. We provide a bridge between the two configurations with a third one that can be considered a specification for both. We do not use the theory of collineations or the analytic description of the plane over a ternary ring.
\end{abstract}

\maketitle

\section{Introduction}

The first geometric statement about projective planes appears to be that of Pappus, who postulated the collinearity of three points in a configuration of points and lines. His finding, the so-called Pappus' theorem initiated similar geometric studies, in which the basic objects are points and lines, and between them there is a fundamental relationship, which we call incidence. It is not clear whether the truth value of a statement about a configuration depends on what "space" the configuration is in. In order to answer this question, the basic properties of incidence must be fixed, so we need "axioms" that describe this relationship. The simplest such system is the axiom system of the plane of incidence (or projective plane). When we formulate additional true statements using the given axioms, we use the axiomatic approach of synthetic geometry. Extending this method to "higher dimensional" geometry requires more basic objects, the "planes" of the geometry, and more axioms between the new and old basic objects. This raises a new problem, namely whether the axiom system for the new space provides an opportunity to formulate stronger results for the planes in the space than the set of axioms for the plane or not. Desargues configuration and the corresponding theorem answer this question, in a well-defined projective space Desargues' theorem is always true, in contrast to the case of the plane of incidence, when it would be either true or false. Therefore, the axiomatic approach of synthetic plane geometry and the approach through the mutual position of planes and lines of spatial geometry give different results with respect to a specific plane. This leads to another approach to describing projective geometry: the algebraic (or analytic) approach to coordinate geometry. This study of projective planes is usually based on a coordinate method over an algebraic structure. In the first half of the twentieth century, many important questions were answered using this method, therefore the "modern" description of projective planes requires knowledge of the appropriate (sometimes very strange) algebraic structures.

In this paper, we focus only on the two basic configurations of projective geometry and give a configuration that connects them. Our method is synthetic and does not require algebraic knowledge. Starting from the spatial configuration of the real projective space, we arrived at the planar configuration with the above property through successive specializations. Since the vast literature on projective planes contains much information on such special configurations, we also systematically review the corresponding results.

\subsection{History}
Many works attempt to give an overview of this very special topic of geometry, but the terminologies used in most cases differ. In our bibliography we have listed some monographs \cite{baer, dembowski, hall-1959, hughes, seidenberg, stevenson, veblen-young}, as well as some reviews and research articles \cite{burn, grari, hall-1943, marchisotto, spencer, weibel, veblen-wedderburn}. On a plane of incidence, in which Desargues' theorem is true, we can assign homogeneous coordinates to the points, which are elements of a division ring (skew field). The skew field belonging to the Desarguesian plane is a (commutative) field if and only if Pappus' theorem is true in the plane. Consequently, Pappus' theorem is necessary to define the standard tools of "classical projective geometry". On the other hand, to create a well-functioning general coordinate system of the plane, it is sufficient to use the "Little-Desargues theorem", which is a weakened (or minor) version of Desargues' theorem. A plane with the Little-Desargues property is called a \emph{Moufang plane} (see \cite{moufang} ). (In this case, the coordinates form an alternative division ring.) If the Little-Desargues property is not fulfilled in all situations, then we need a coordinate system over a more general algebraic structure to define and examine the basic concepts of projective geometry. (Algebraically, the coordinates form a planar ternary ring, and the system is "local", related to four points in the plane that are located at general positions.)

Skornyakov's paper (see \cite{skornyakov}) contains the first detailed analysis of special (or minor) forms of Desargues and Pappus theorems. The notation of Skornyakov's paper covers the possible specializations of Desargues' configuration. Smaller forms of Desargues' theorem and Pappus' theorem were also investigated in some later papers, which enrich the literature with additional results (see, for example, \cite{pickert} and \cite{al-dhahir}).
When Skornyakov requires that one, two, or three vertices of one triangle fall on the lines of the corresponding side of the other triangle, he denotes the corresponding propositions $D_1, D_2$, and $D_3$. (For example, our proposition $wD$ is a proposition of type $D_1$ and the proof of Proposition \ref{thm:HDimpliesLD} contains configurations $D_1$, $D_2$ and $D_3$.) It also distinguishes two specializations when two extra fits is present in both, but in the first situation two sides of the first triangle contain the corresponding vertices of the second triangle, while in the other the first fit occurs on one side of the first triangle, while in the second it occurs on the corresponding side of the second triangle. While the first is denoted by $D_2$, the second by $D_1^1$. Finally, if the extra fit falls on a connecting line passing through the center of the perspective, the corresponding specialization is marked with a comma. (Our LD theorem is denoted by $D'$ and our wLD theorem by $D'_1$ in this system.) Similarly, in the article \cite{al-dhahir} we find five minor forms of the Pappus theorem, which are It is denoted by $P_i$, where $i = 1,2,3,4,5$. $P_2$ is the same as our LP theorem, $P_1$ is the dual of $P_2$ (the theorem we call Small-Pappus theorem), $P_3$ is called the Perspective-Pappus theorem, $P_4$ is its dual and $P_5$ is a statement about the incomplete Pappian chain of inscribed triangles. For the sake of clarity, we list the proven aspects of these works with their original designations.

\begin{itemize}\label{item:spa}
\item $D'$ is equivalent to $D_1$ (\cite{skornyakov}. \cite{pickert})
\item $D'_1$, $D_1^1$, $D_2$ are equivalent statements (\cite{al-dhahir}).
\item $D'_2$ is equivalent to $D_3$ (\cite{al-dhahir}).
\item $D'$ implies $P_1$ and $P_2$ (\cite{pickert}).
\item $D_2$ implies all $P_j$, where $j=1,2,3,4,5$. (\cite{al-dhahir})
\end{itemize}

In the rest of our article, we give the name of a specialization that refers to the derivation of the appropriate configuration.

\subsection{Results of the paper}

We prove four statements using a purely synthetic method, three of which were not found in the reviewed literature, and we provide a direct proof for the fourth known statement.

\begin{theorem}\label{thm:main}
The following statements are true for all planes of incidence.
\begin{itemize}
\item The Little-Desargues theorem follows from the homologous-Desargues theorem. (Proposition \ref{thm:HDimpliesLD}.)
\item The weak Little-Desargues theorem is equivalent to the strong Perspective-Pappus theorem. (Proposition \ref{thm:wLDequivalenttosPP}.)
\item The Little-Pappus theorem follows from the weak Little-Desargues theorem. (Proposition \ref{thm:wLDequivalenttoLP}.)
\end{itemize}
In the Euclidean model of the real projective plane, the Little-Desargues theorem can be proved from the weak Little-Desargues theorem. (\ref{thm:wLDimpliesLDinR} suggestion.)
\end{theorem}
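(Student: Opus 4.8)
Over the reals two tools are available that an abstract plane of incidence does not provide: one can always produce auxiliary points in sufficiently general position, and one can pass to limits of configurations. The plan is to use the first to reduce an arbitrary Little-Desargues configuration to a fixed number of weak-Little-Desargues configurations, and the second to dispose of the general-position hypotheses that this reduction forces upon us.

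Fix triangles $ABC$ and $A'B'C'$ perspective from a centre $O$ and suppose we are in the Little-Desargues situation: writing $Q=CA\cap C'A'$, $R=AB\cap A'B'$, $P=BC\cap B'C'$, the points $O,Q,R$ lie on a common line $e$, and we must show $P\in e$. Put $e_A=OA$, $e_B=OB$, $e_C=OC$, so that also $A'\in e_A$, $B'\in e_B$, $C'\in e_C$. The lines $CA$, $C'A'$ both pass through $Q\in e$ and the lines $AB$, $A'B'$ both pass through $R\in e$, so the desired conclusion $P\in e$ is equivalent to the single coincidence $BC\cap e=B'C'\cap e$. I would now insert one or two \emph{bridging} triangles. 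First a triangle $A''B''C''$, perspective from $O$, whose sides $C''A''$ and $A''B''$ are made to pass through $Q$ and $R$ respectively --- so that the two cross-points $CA\cap C''A''=C'A'\cap C''A''=Q$ and $AB\cap A''B''=A'B'\cap A''B''=R$ lie on $e$ automatically --- and which carries, in addition, an extra fit (say $A''\in BC$, or $A\in B''C''$) making $(ABC,A''B''C'')$ a weak-Little-Desargues configuration; symmetrically, a triangle $A'''B'''C'''$ built from $A'B'C'$ with $(A'''B'''C''',A'B'C')$ a weak-Little-Desargues configuration. Because the corresponding sides of $A''B''C''$ and $A'''B'''C'''$ already run through $Q$ and $R$, the pair $(A''B''C'',A'''B'''C''')$ is again a Little-Desargues configuration, and if the two constructions are coordinated so that it too carries an extra fit, the weak-Little-Desargues theorem applies to all three consecutive pairs and gives that $BC\cap B''C''$, $B''C''\cap B'''C'''$ and $B'''C'''\cap B'C'$ all lie on $e$. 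Since every line other than $e$ meets $e$ in exactly one point, these three incidences chain up to $BC\cap e=B''C''\cap e=B'''C'''\cap e=B'C'\cap e$, which is exactly $P\in e$.

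The bridging step is nothing but the intersection of finitely many lines (and, wherever a parameter has to be solved for, of finitely many lines and algebraic curves), so it produces genuine, non-degenerate auxiliary triangles for every Little-Desargues configuration lying outside a proper closed subset $Z$ of the --- already incidence-constrained --- real variety of such configurations; for every configuration off $Z$ the previous paragraph gives $P\in e$. As $P\in e$ is itself a closed condition and the complement of $Z$ is dense, $P\in e$ holds for all configurations, $Z$ included. (A variant that shortcuts some of the bookkeeping: by Proposition~\ref{thm:wLDequivalenttosPP} the weak-Little-Desargues theorem is equivalent to the strong Perspective-Pappus theorem; one may upgrade the latter to the full Pappus theorem over $\mathbb{R}$ by the same general-position-and-continuity device, and then invoke the classical fact that Pappus' theorem implies Desargues' theorem, hence the Little-Desargues theorem.)

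The main obstacle is the bridging construction together with its incidence bookkeeping: the auxiliary triangle(s) must be chosen so that \emph{every} consecutive pair simultaneously satisfies the hypotheses of the weak-Little-Desargues theorem --- in particular so that all the required extra fits can be realized at once --- and then one must check, using only the incidence axioms, that the several conclusions of the weak-Little-Desargues theorem combine into $P\in e$. The secondary difficulty is to make the concluding continuity step rigorous, i.e.\ to verify that the configurations on which the construction degenerates really do form a set of strictly smaller dimension inside the constrained configuration space, so that ``dense'' is not vacuous. These two points are precisely where the real model, rather than an arbitrary plane of incidence, is essential.
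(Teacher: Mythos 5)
You address only the last clause of the theorem (the three synthetic items are exactly Propositions \ref{thm:HDimpliesLD}--\ref{thm:wLDequivalenttoLP} and need their own incidence arguments), so my remarks concern your sketch of ``wLD implies LD in the real model''. The obstacle you yourself flag at the end --- coordinating the bridges so that \emph{every} consecutive pair carries an extra fit --- is not bookkeeping; it is where the argument breaks. Once a bridge triangle is required to be perspective to $ABC$ from $O$ with its $C''A''$-side through $Q$ and its $A''B''$-side through $R$, it is determined by the single parameter $A''\in OA$ (then $C''=QA''\cap OC$, $B''=RA''\cap OB$). Moreover the only extra fit that feeds the wLD theorem in the way your chaining needs is the $A$-type fit (the vertex corresponding to $A$ of one triangle on the $BC$-type side of the other); a $B$- or $C$-type fit would put the unknown point $P$ into the hypotheses. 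For a fixed member of this one-parameter family, the condition $A''\in BC$ (resp.\ $A\in B''C''$) determines the parameter of its neighbour uniquely, so the chain starting from $ABC$ is completely forced: the reachable bridges form the orbit of a single projective map of the line $OA$, and closing the chain at $A'B'C'$ is one extra equation that fails for a \emph{generic} LD configuration. Hence the set where your construction degenerates is not a thin closed subset $Z$; it is the good set that is thin (a countable union of proper subvarieties), and your concluding density/continuity step runs in the wrong direction. The same directional error affects the proposed shortcut: perspective Pappus configurations form a nowhere-dense, codimension-one subset of all Pappus configurations, so sPP cannot be ``upgraded'' to the full Pappus theorem by a limiting argument. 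Note finally that if the coordination could be achieved by incidence constructions alone, your argument would prove wLD $\Rightarrow$ LD on every plane of incidence, a question the paper explicitly leaves open; any correct proof must therefore exploit the real structure more heavily than general position plus passage to limits.

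The paper's proof of Proposition \ref{thm:wLDimpliesLDinR} does exactly that, and it is worth seeing how it escapes your parameter deficit. It normalizes the axis to the line at infinity of the Euclidean model, so the LD data become triangles with $AA'\parallel BB'\parallel CC'$, $AB\parallel A'B'$, $AC\parallel A'C'$, and it takes the bridges to be genuine Euclidean \emph{translates} of $ABC$: a two-parameter family in which any two members automatically have all corresponding sides parallel and are perspective from a point at infinity, so each required vertex-on-side fit costs only one condition and always leaves a free parameter. Archimedes' axiom guarantees that finitely many such steps reach a final translate $A''B''C''$ chosen so that one of its sides contains a vertex of $A'B'C'$ (and one contains the last chain vertex), producing a wLD configuration with $A'B'C'$; transitivity of parallelism --- i.e.\ of incidence with the line at infinity --- then chains the wLD conclusions into $BC\parallel B'C'$. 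The essential real ingredients are thus translations and the Archimedean axiom, not density of general position.
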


The known implications between the special forms of Desargues' theorem and Pappus' theorem are shown in the diagram below. The black arrow denotes trivial or previously proven conclusions, the blue arrow denotes the well-known but synthetically verified conclusions in our article, and the red arrow those that are not found in the literature. (We do not consider the implications that are based on "duality" in the literature without direct evidence. This is because our investigation also applies to planes where the "dual" property does not exist.)

\begin{figure}[ht]
\centering
\includegraphics[width=\textwidth]{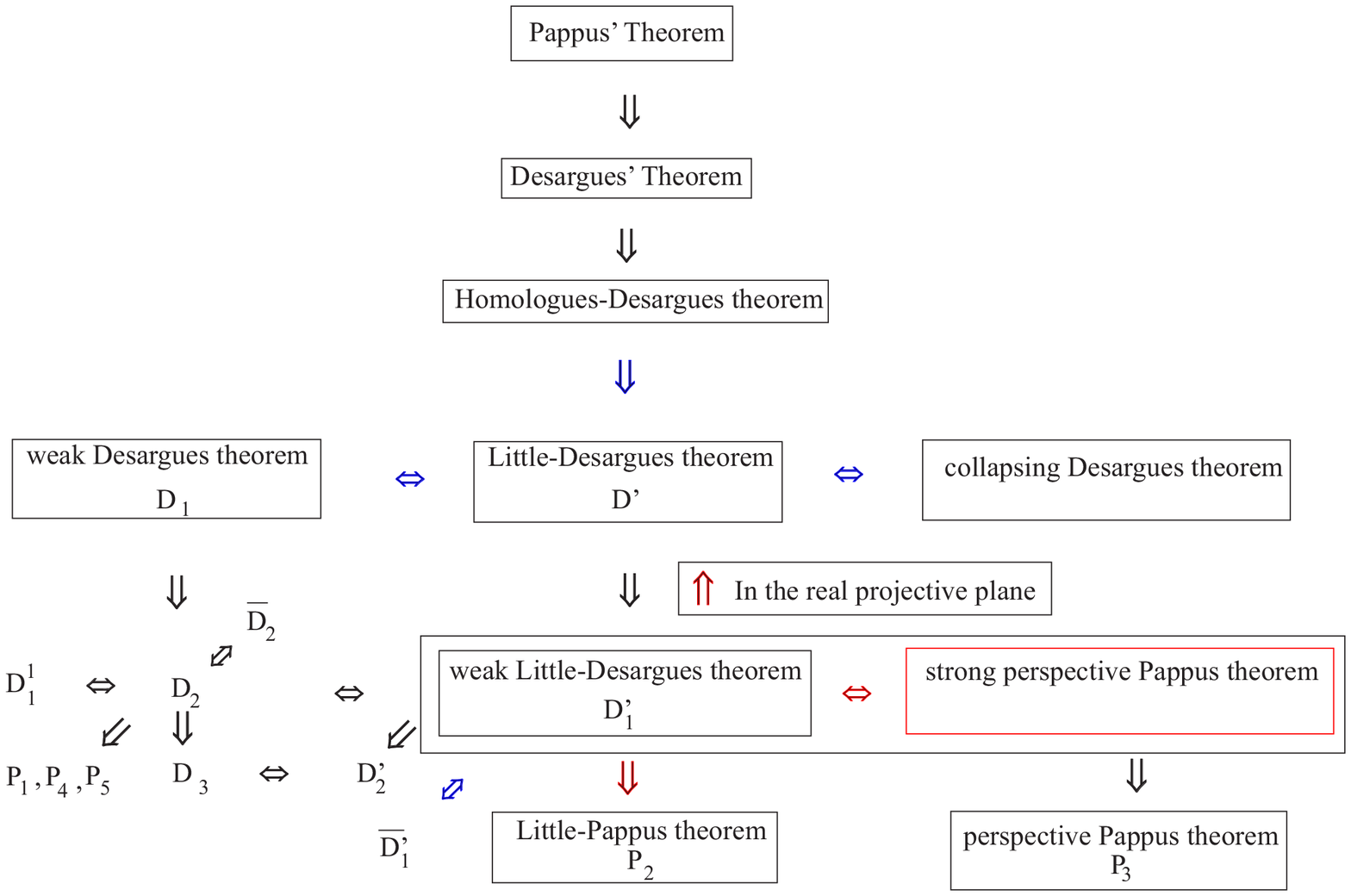}
\caption{The table of implications.}
\label{fig:implications}
\end{figure}

We have only two important questions left unanswered: \emph{Does the weak Little-Desargues theorem follow from the little-Pappus theorem? Does the Little-Desargues Theorem follow from the weak Little-Desargues Theorem?} For historical reasons, in this paper we denote many propositions by theorems, but prove only one theorem, which is the collection of the four "propositions" we have proved. The placement of these statements in the article corresponds to the logic of specialization.

\section{The plane of incidence and the basic configurations}

Veblen and Young give the \cite{veblen-young} synthetic structure of the incidence properties of $n$-dimensional projective geometry. They used two undefined objects, the point and the line, and the concepts of plane, space, and 4-space are defined based on them. In our treatment, the above "system of assumptions" changes to statements that are theorems in the Veblen-Young system. We lose the requirement of simplicity and independence, but we get a more didactic system of axioms, which leads to faster construction of the resulting planar geometry.

Some points are called collinear if they fall on the same line. The fitting of points and lines can be considered a so-called \emph{plane of incidence}, which satisfies the following three axioms:

\begin{Pax}
Two points uniquely determine a line which is incident with them.
\end{Pax}
\begin{Pax}
Two lines uniquely determine a point which is incident with them.
\end{Pax}
\begin{Pax}
There exists four points, no three of which are collinear.
\end{Pax}

The plane of incidence is a precursor to the concept of the classical projective plane. We can raise interesting problems in it, e.g. we can examine statements that assert the fit of certain lines and points based on the fit of other points and lines. Statements of this type are called \emph{configuration statements}. If a configuration theorem holds on the plane, then there is a \emph{configuration} containing exactly the incidences that the theorem requires. An example is Desargues' theorem:

\begin{theorem}[Desargues]\label{thm:desargues}
The lines incident with the pair of points $A,A'$; $B,B'$ and $C,C'$ are go through the same point $S$ if and only if the points $X$, $Y$ and $Z$ incident with the pairs of lines $AB$, $A'B'$; $BC$, $B'C'$ and $AC$, $A'C'$ are incident with the same line $s$.
\end{theorem}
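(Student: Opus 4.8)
The plan is to prove Desargues' theorem in the classical way: by stepping out of the plane into a three-dimensional projective space, proving the spatial version there, and then projecting back down. Since the excerpt's axioms \textbf{P1}--\textbf{P3} describe only a plane of incidence, I would first have to embed this plane in a Desarguesian projective space (the author presumably introduces higher-dimensional incidence axioms just after this point, so I will assume the ambient space exists and that the plane in question lies in it). The statement is an ``if and only if'', but the two directions are dual to one another in the space, so it suffices to prove one direction carefully and remark that the converse follows by the same spatial argument read in reverse (or by duality in the three-space, where both Desargues directions are automatically available).

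For the forward direction, assume lines $AA'$, $BB'$, $CC'$ meet in a common point $S$. The key step is to choose a point $O$ not in the plane $\pi$ containing the two triangles, pick a point $\tilde{A}$ on line $OA$ distinct from $O$ and $A$, and let $\tilde{A}'$ be the intersection of line $O A'$ with line $S\tilde A$ (these two lines are coplanar because $O$, $S$, $A$, $A'$ all lie in the plane spanned by $O$ and the line $AA'$, so they meet). I would then check that the triangle $\tilde A \tilde B \tilde C$ (constructed analogously) projects from $O$ onto $ABC$ and also projects from $S$ onto $A'B'C'$, so that the ``tilde'' triangle is genuinely out of the plane $\pi$ while being perspective to both. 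The planes $\tilde A\tilde B\tilde C$ and $ABC$ are two distinct planes, hence they meet in a line $\ell$; likewise the planes $\tilde A\tilde B\tilde C$ and $A'B'C'$ meet in a line $\ell'$. The point $X = AB \cap A'B'$ lies on both $\pi$ and the plane $\tilde A\tilde B\tilde C$ (since $AB$ is the image of $\tilde A\tilde B$ under projection from $O$, and $A'B'$ is its image under projection from $S$, so $X$ lies on the line $\tilde A\tilde B$ extended, hence in that plane), so $X$ lies on $\ell = \pi \cap (\tilde A\tilde B\tilde C)$. The same reasoning places $Y$ and $Z$ on $\ell$. Therefore $X$, $Y$, $Z$ are collinear, lying on the line $s := \ell$.

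I expect the main obstacle to be the degenerate cases rather than the generic argument above. One must handle: the case where the two triangles already lie in different planes (then the argument is even easier — $X,Y,Z$ lie on the intersection line of those two planes directly); the case where $S$ coincides with a vertex or the triangles are not in ``general position''; and, more delicately, the possibility that the auxiliary out-of-plane point or the constructed triangle $\tilde A\tilde B\tilde C$ collapses (e.g. if $S = A$, or if two of the constructed points coincide). These require either a perturbation of the choice of $O$ and $\tilde A$ or a separate short argument. A second, more structural obstacle is making precise exactly which spatial incidence facts I am allowed to invoke — that two distinct planes in the space meet in a line, and that a line not in a plane meets it in exactly one point — so the write-up should pin these down as consequences of the space axioms before the construction begins. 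Finally, for the converse direction, if I do not wish to appeal to spatial duality I can instead apply the forward direction to a cleverly chosen pair of triangles (a standard trick: the collinearity of $X,Y,Z$ makes triangles $AA'X$ and $CC'Z$ — or a similar pair — perspective from a point, whose conclusion yields that $AA'$, $BB'$, $CC'$ concur); sorting out which triangles to use is the one genuinely fiddly combinatorial point in that half.
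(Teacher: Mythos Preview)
The central issue is that the paper does \emph{not} prove this theorem, and for a structural reason: immediately after stating it, the author asks ``can we prove Desargues' theorem on a general plane of incidence?'' and answers that it is generally false, citing Moulton's non-Desarguesian plane. So Theorem~\ref{thm:desargues} is presented as a \emph{configuration statement} whose truth value depends on the plane, not as a consequence of axioms P1--P3. Your parenthetical assumption --- that ``the author presumably introduces higher-dimensional incidence axioms just after this point'' --- is exactly what does not happen; the whole paper is about which weakenings of Desargues and Pappus imply one another in a bare plane of incidence, without ever assuming an ambient $3$-space. A plane of incidence need not embed in any projective $3$-space, and indeed embeddability is equivalent to Desargues' theorem itself, so your first step would be circular.

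That said, your spatial construction is sound \emph{as an argument in real projective $3$-space}, and it matches in spirit what the paper sketches informally: the author remarks that Figure~\ref{fig:desargues} shows a spatial Desargues configuration whose central projection onto a plane yields the planar one, which is precisely the mechanism you describe with the lifted triangle $\tilde A\tilde B\tilde C$. For the converse direction, the paper gives the same ``apply the forward direction to a new pair of triangles'' trick you mention at the end; specifically, it uses the triangles on vertex pairs $A,B$; $A',B'$; $Y,Z$, which are perspective from $X$, so your instinct there is right and the combinatorics are not hard once you pick those triangles. In short: your argument is the standard correct proof \emph{in a projective space}, but the paper deliberately works in a setting where no such proof is available, and treats Desargues as an axiom-like hypothesis rather than a theorem to be derived.
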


\begin{figure}[ht]
\includegraphics[scale=1]{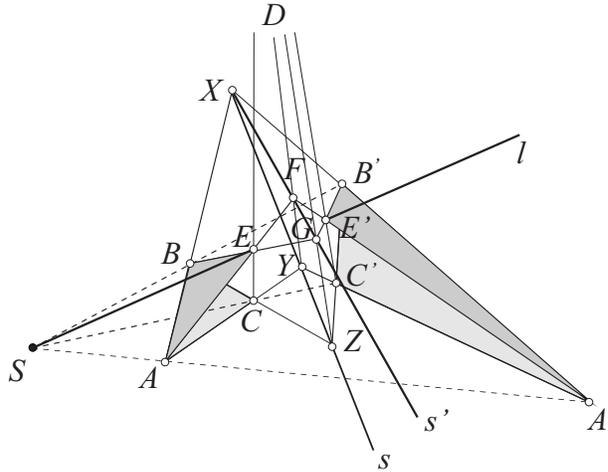}
\caption{The theorem of Desargues}
\label{fig:desargues}
\end{figure}

The question is: can we prove Desargues' theorem on a general plane of incidence? The answer is known, this theorem is usually not true. (A nice counterexample is given by Moulton in \cite{moulton}. For more information on non-Desarguesian planes, I recommend Weibel's \cite{weibel} survey.)

Note that the best-known example of the plane of incidence is the real projective plane for which this theorem holds. It is also immediately noticeable that the opposite statement also follows from the proved direction if we apply the statement to the pairs of points $AB$, $A'B'$ and $YZ$ of Figure \ref{fig:desargues} which are perspective from the point $X$. In real projective geometry, a proof can also be obtained by using space axioms. The figure \ref{fig:desargues} contains a complete $\{S,A,A',B,B',E,E',X,F,G\}$ Desargues configuration embedded in the three-dimensional projective space, its central projection onto the plane $SAB$ gives the plane Desargues configuration of ten points $\{S,A,A',B,B',C,C',X,Y,Z\}$.

Another important statement of real projective geometry is the Pappus theorem. It is not valid on all planes of incidence, not even on all Desarguesian planes. Hessenberg proved in \cite{hessenberg} that in a plane of incidence Pappus' theorem implies Desargues' theorem. This means that a plane satisfying Pappus' theorem is always Desarguesian. The Pappus configuration is the complete configuration of the Pappus' theorem:

\begin{theorem}[Pappus]\label{thm:pp}
Assume that $A,B,C$ are three points of a line and $A',B',C'$ are three points of an other line. Then the points $C''=AB'\cap A'B$, $B''=AC'\cap A'C$ and $A''=BC'\cap B'C$ are collinear.
\end{theorem}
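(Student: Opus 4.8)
The plan is to prove Pappus' theorem in the real projective plane (on an arbitrary plane of incidence it is false, as remarked above), working in the Euclidean model and reducing the projective collinearity assertion to a metric computation via Menelaus' theorem. Since the conclusion is projectively invariant, I would first set aside the degenerate positions — the two given lines coinciding, two of the six points $A,B,C,A',B',C'$ coinciding, or a configuration in which one of $A'',B'',C''$ fails to be defined — and treat the generic case, in which (after a projective transformation if necessary) all ten points $A,B,C,A',B',C',A'',B'',C''$ together with the auxiliary points introduced below are finite.

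Next I would build the auxiliary triangle $PQR$ cut out by the three lines $AB'$, $BC'$, $CA'$, so that $P=CA'\cap AB'$, $Q=AB'\cap BC'$ and $R=BC'\cap CA'$, and observe that each of the five lines $ABC$, $A'B'C'$, $A'B$, $AC'$, $B'C$ is a transversal of this triangle. Writing out Menelaus' theorem for the transversals $A'B$, $AC'$, $B'C$ and multiplying the three resulting relations, the nine signed ratios regroup — it is precisely here that the commutativity of multiplication of real numbers enters, and it is precisely this feature that a general Desarguesian, i.e.\ non-Pappian, plane lacks — into the product of (i) the Menelaus ratio of $PQR$ along $ABC$, (ii) the Menelaus ratio along $A'B'C'$, and (iii) the Menelaus ratio along the line through $C''$, $A''$, $B''$. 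Factors (i) and (ii) equal $1$ because $ABC$ and $A'B'C'$ are genuine transversals of $PQR$, so factor (iii) equals $1$; by the converse of Menelaus' theorem, $C''$, $A''$ and $B''$ are collinear, which is the assertion.

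The only real obstacle is the bookkeeping: one must check that $PQR$ is nondegenerate and that none of the six given points is a vertex of it in the generic case, keep the sign conventions consistent across the five applications of Menelaus so that the regrouping is legitimate, and finally go back and dispose — by direct inspection, since they are finitely many — of the special positions excluded at the start. An equally elementary alternative is to choose the projective transformation at the outset so that $B''$ and $C''$ land on the line at infinity; then the claim becomes ``$AB'\parallel A'B$ and $AC'\parallel A'C$ imply $BC'\parallel B'C$'', and, letting $O$ denote the common point of the two base lines and using signed lengths $OA,OB,OC$ and $OA',OB',OC'$ measured along them, the two hypotheses become (by similar triangles) $OA\cdot OA'=OB\cdot OB'$ and $OA\cdot OA'=OC\cdot OC'$, whence $OB\cdot OB'=OC\cdot OC'$, which is the desired third parallelism — again with the products commuting.
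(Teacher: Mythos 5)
Your proposal is correct in substance, but note that the paper never proves Theorem~\ref{thm:pp} at all: it is quoted as a classical fact about the real projective plane (indeed, the surrounding text stresses that it fails on general planes of incidence and even on some Desarguesian ones), and the only commentary offered is the equivalence with its dual (Pappus--Brianchon) and the connection with Gallucci's theorem on transversals of skew lines, with a reference to \cite{gho} for the spatial argument. Your route is therefore genuinely different from anything in the paper: you work in the Euclidean model and reduce collinearity of $A''$, $B''$, $C''$ to Menelaus' theorem applied five times to the triangle cut out by $AB'$, $BC'$, $CA'$ (or, in your alternative, you send $B''$ and $C''$ to infinity and finish with the intercept theorem, $OA\cdot OA'=OB\cdot OB'=OC\cdot OC'$). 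Both versions are standard and sound, and they have the virtue of making visible exactly where commutativity of real multiplication is used, which matches the paper's remark that Pappus is strictly stronger than Desargues; the cost is that the argument is metric rather than incidence-theoretic, so it proves the theorem only for the real plane (which is all the statement can claim anyway) and it leaves the degenerate positions to case-checking. Two of those deserve more than the "finitely many positions" dismissal: the sign bookkeeping (with the usual signed-ratio convention each Menelaus product is $-1$, so the three transversal relations combine with the two relations for $ABC$ and $A'B'C'$ to give $-1$ for the $A''B''C''$ ratio, which is what the converse of Menelaus needs), and the configurations where your auxiliary triangle collapses, e.g.\ when $AB'$, $BC'$, $CA'$ are concurrent; these form a positive-dimensional family of configurations, not finitely many, so they should be handled either by switching to the triangle cut out by $A'B$, $B'C$, $C'A$, by your affine argument, or by a continuity/limiting argument available in the real plane.
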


Based on the figure \ref{fig:ppandpb}, we can easily verify that the dual form of this theorem (which was first proved by Brianchon) is equivalent to the original statement. We have:

\begin{figure}[ht]
  \centering
    \includegraphics[scale=0.8]{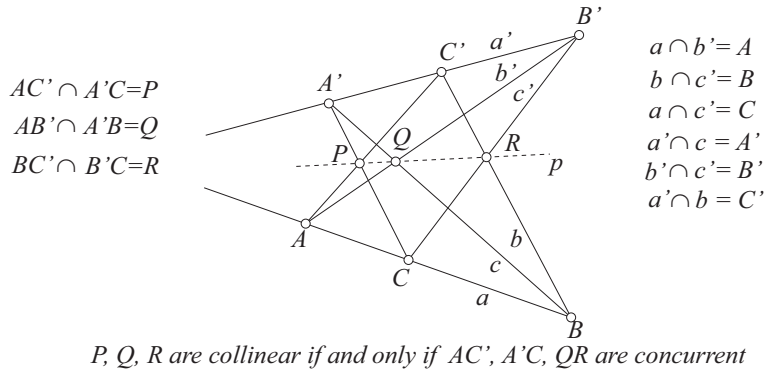}\\
  \caption{The equivalence of the Pappus's theorem and the theorem of Brianchon}\label{fig:ppandpb}
\end{figure}

\begin{theorem}[Pappus-Brianchon]\label{thm:pb}
Assume that $a,b,c$ are three lines through a point and  $a',b',c'$ are three lines through another point. Then the lines $c''=(a\cap b',a'\cap b)$, $b''=(a\cap c',a'\cap c)$ and $a''=(b\cap c',b'\cap c)$ are concurrent.
\end{theorem}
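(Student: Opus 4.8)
The plan is to deduce the Pappus--Brianchon statement from Pappus' theorem (Theorem~\ref{thm:pp}); since the whole argument can be reread in the dual, the two statements will in fact be equivalent on every plane of incidence. Write $P$ for the common point of $a,b,c$ and $Q$ for the common point of $a',b',c'$, and put $A_1=b\cap c'$, $A_2=b'\cap c$, $B_1=a\cap c'$, $B_2=a'\cap c$, $C_1=a\cap b'$, $C_2=a'\cap b$, so that $a''=A_1A_2$, $b''=B_1B_2$, $c''=C_1C_2$. A direct count shows that the nine points $P,Q,A_1,A_2,B_1,B_2,C_1,C_2$ together with the (so far hypothetical) common point of $a'',b'',c''$, and the nine lines $a,b,c,a',b',c',a'',b'',c''$, realize exactly the incidence pattern of a Pappus configuration, but with the roles of ``point'' and ``line'' interchanged relative to Theorem~\ref{thm:pp}. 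Since the Pappus configuration is self-dual---which Figure~\ref{fig:ppandpb} makes visible---this interchange carries the hypotheses of Pappus' theorem onto the hypotheses of Theorem~\ref{thm:pb} and its conclusion ``three points are collinear'' onto ``three lines are concurrent''. This is the substance of the equivalence.

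To turn it into a self-contained synthetic proof---in particular, to avoid invoking a polarity of the ambient plane, which need not exist---I would realize the required Pappus configuration explicitly inside the given plane. Consider the two triangles $A_1B_1C_1$ and $A_2B_2C_2$. Their pairs of corresponding sides fall onto configuration lines: $A_1B_1=c'$ and $A_2B_2=c$ meet in $c\cap c'$, and $B_1C_1=a$ and $B_2C_2=a'$ meet in $a\cap a'$, while $C_1A_1$ and $C_2A_2$ meet in a point $Y$; moreover the joins of corresponding vertices are $A_1A_2=a''$, $B_1B_2=b''$, $C_1C_2=c''$. Hence, by Desargues' theorem (Theorem~\ref{thm:desargues}), which holds here because Pappus' theorem implies it (Hessenberg), the lines $a'',b'',c''$ are concurrent if and only if $c\cap c'$, $a\cap a'$ and $Y$ are collinear. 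Finally, this last collinearity is one instance of Pappus' theorem: applying Theorem~\ref{thm:pp} to the collinear triples $A_1,C_2,P$ on the line $b$ and $A_2,C_1,Q$ on the line $b'$, a short check identifies the three collinear points it yields as $c\cap c'$, $a\cap a'$ and $A_1C_1\cap A_2C_2=Y$. Rereading the same steps in the dual plane gives the reverse implication, and hence the equivalence.

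The difficulty here is purely organizational rather than conceptual. One must fix the correspondence between the two configurations---equivalently, pick the auxiliary triangles and the Pappus hexagon $A_1C_1PA_2C_2Q$, whose vertices alternate on $b$ and $b'$---so that all the ``incidental'' coincidences used above really take place, and then handle, separately, the degenerate positions of the data in which a triangle degenerates or the hexagon collapses (for instance when one of the six given lines passes through both $P$ and $Q$, or when $C_1A_1$ and $C_2A_2$ coincide). None of this is deep, but it has to be attended to for the argument to be complete.
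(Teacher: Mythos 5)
Your proof is correct in substance, but it takes a genuinely heavier route than the paper, whose verification via Figure \ref{fig:ppandpb} amounts to re-reading the Brianchon configuration directly as a Pappus configuration, so that a single application of Theorem \ref{thm:pp} already yields the concurrency, with no Desargues and no Hessenberg. In fact you already have the right hexagon in hand: keep your triples $P,A_1,C_2$ on $b$ and $Q,C_1,A_2$ on $b'$, but use the correspondence $P\leftrightarrow Q$, $A_1\leftrightarrow C_1$, $C_2\leftrightarrow A_2$ instead of the one you chose. The three cross-joins are then $PC_1\cap QA_1=a\cap c'=B_1$, $PA_2\cap QC_2=c\cap a'=B_2$ and $A_1A_2\cap C_1C_2=a''\cap c''$, so Pappus states exactly that $a''\cap c''$ lies on $b''=B_1B_2$, i.e.\ that $a''$, $b''$, $c''$ are concurrent; both directions of the equivalence are visible symmetrically in this identification. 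Your detour---Pappus with the other correspondence to get $c\cap c'$, $a\cap a'$, $Y$ collinear, then the converse direction of Theorem \ref{thm:desargues} applied to $A_1B_1C_1$ and $A_2B_2C_2$, with Desargues imported through Hessenberg's theorem---does check out (the side/vertex correspondences are consistent), but it makes the equivalence rest on Hessenberg, a far deeper result than is needed and one that sits awkwardly with the paper's stated aim of direct synthetic verification; it also obliges you to treat the degeneracies of the auxiliary triangles and of $Y$, which the one-step application sidesteps. Note also that your first paragraph (self-duality of the abstract configuration) has no probative force on its own, as you yourself acknowledge---only the explicit realization in your second paragraph does the work; and your closing appeal to rereading the derivation in the dual plane for the converse is legitimate, but becomes unnecessary once the one-step identification is used.
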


The complete Pappus configuration contains nine points and nine lines, with three fits on each line and each point. In real projective geometry, there is a nice connection with Gallucci's theorem on the transversals of skew lines. This connection can be seen in the figure \ref{fig:gpp} (see \cite{gho} for more details).

\begin{figure}[ht]
  \centering
    \includegraphics[scale=0.5]{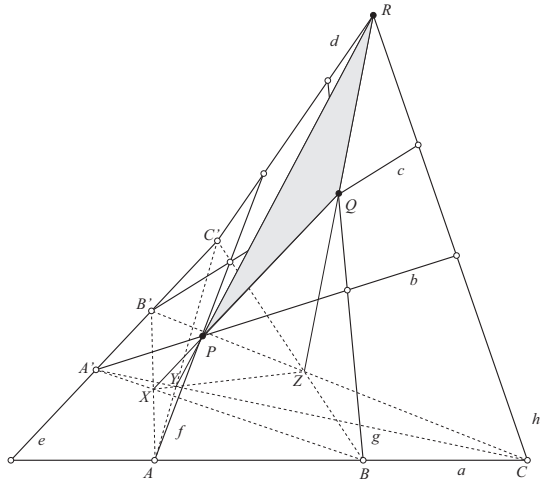}\\
  \caption{Gallucci's theorem is equivalent to Pappus' theorem}\label{fig:gpp}
\end{figure}

\section{Specializations of Desargues' theorem and the corresponding configurations}

In the figure \ref{fig:desargues}, we see that the planar configuration $\{S,A,A',B,B',E,E',X,F,G\}$ can be specialized by the choice of the center of the projection. For example, if we choose $D$ on the plane of the line $s'$ and the point $S$, we get that the plane configuration will contain a new incidence relation, namely $S\in s$.

\begin{figure}[ht]
  \centering
    \includegraphics[scale=1]{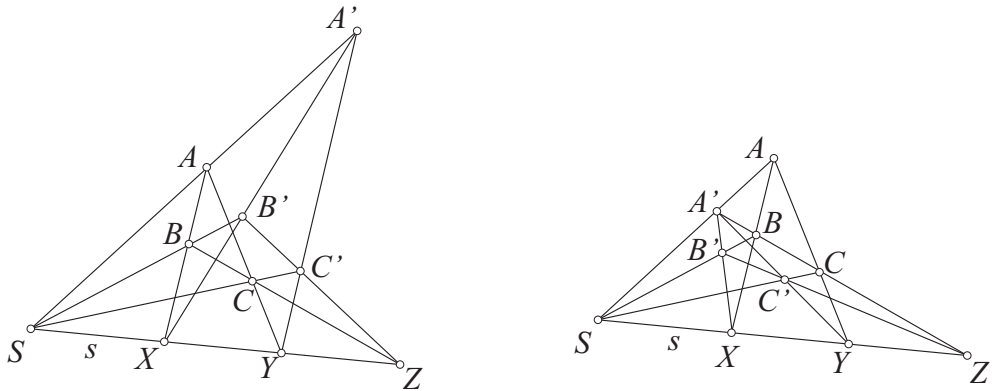}\\
  \caption{Little-Desargues theorem and the weak Little-Desargues theorem}\label{fig:littledesargues}
\end{figure}

This new configuration (see the left side of the figure \ref{fig:littledesargues}) is the configuration of the Little-Desargues theorem. In this case, the statement is:

\begin{theorem}[LD theorem]\label{thm:LD}
The lines incident with the pair of points $A,A'$; $B,B'$ and $C,C'$ are go through the same point $S$, moreover $S,X=AB\cap A'B', Y=AC\cap A'C'$ are also collinear then $Z=AB\cap A'B'$ is incident with the line $s=XY$.
\end{theorem}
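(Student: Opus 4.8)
The plan is to recognize the Little-Desargues configuration as the planar ``shadow'' of an ordinary Desargues configuration in real projective $3$-space, following the specialization sketched just before the statement, and then to read off the conclusion from the (trivial) validity of Desargues' theorem in space. A much shorter route, which I would only mention, is that in any plane where the full Desargues' theorem of Theorem~\ref{thm:desargues} holds the collinearity of $X$, $Y$ and $Z=BC\cap B'C'$ is immediate, so the hypothesis $S\in s$ is then redundant; the spatial argument below is essentially the classical proof that the real projective plane is one such plane.

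Working in the plane $\pi$ that carries the configuration, I would first \emph{lift} one triangle out of $\pi$. Pick a point $D\notin\pi$ and a point $\widehat S$ on the line $DS$ with $\widehat S\neq D,S$. Since $AA'$ passes through $S$, the plane $\sigma_A=\langle D,A,A'\rangle$ contains the line $DS$, hence $\widehat S$, so the lines $DA$ and $\widehat S A'$ meet inside $\sigma_A$ in a point $\widehat A$; the planes $\sigma_B=\langle D,B,B'\rangle$ and $\sigma_C=\langle D,C,C'\rangle$ yield $\widehat B$ and $\widehat C$ in the same way. By construction $\widehat A,\widehat B,\widehat C$ project from $D$ back onto $A,B,C$; the triangle $\widehat A\widehat B\widehat C$ spans a plane $\widehat\pi\neq\pi$ (because $\widehat A\in DA$ and $DA\cap\pi=A\neq\widehat A$); and $\widehat A\widehat B\widehat C$ is perspective to $A'B'C'$ from the single point $\widehat S$.

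Next I would invoke the elementary spatial Desargues' theorem: two triangles lying in distinct planes and perspective from a point have their three pairs of corresponding sides meeting on the common line $\ell=\widehat\pi\cap\pi$. Hence $\widehat A\widehat B\cap A'B'$, $\widehat A\widehat C\cap A'C'$ and $\widehat B\widehat C\cap B'C'$ all lie on $\ell$; each of them already lies in $\pi$ (it is a point on a side of the untouched triangle $A'B'C'$), and the projection from $D$ fixes points of $\pi$ while carrying the lines $\widehat A\widehat B,\widehat A\widehat C,\widehat B\widehat C$ onto $AB,AC,BC$, so these three points are precisely $X$, $Y$ and $Z$. Therefore $X,Y,Z\in\ell$; since $\ell\subset\pi$ passes through $X$ and $Y$, it is the line $s=XY$, and $Z\in s$ follows. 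In this picture the hypothesis $S\in s$ merely records the incidence $S\in\ell$, i.e.\ that $S$ lies on the common line of the two triangle planes --- exactly the specialization drawn in Figure~\ref{fig:littledesargues}; dropping it returns the full Desargues statement.

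The main obstacle is that this argument is intrinsically about the \emph{real} projective plane --- or, more generally, about a plane embeddable in a projective $3$-space satisfying the usual space-incidence axioms --- and the lift uses precisely that embedding; this is the ingredient that is \emph{not} available on a general plane of incidence, which is why the later propositions must recover weakened forms of the statement synthetically. Inside the spatial argument the only delicate point is general position: one has to choose $D$ and $\widehat S$ so that $\widehat A,\widehat B,\widehat C$ are non-collinear and not coplanar with $A,B,C$, so that $\widehat\pi\neq\pi$ and the projection is injective on the finitely many points involved --- a routine argument --- together with the easy separate treatment of the degenerate sub-cases (two of $X,Y,Z$ coinciding, or $S$ a vertex of a triangle).
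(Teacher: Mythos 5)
Your spatial argument is correct as far as it goes, and it is essentially the picture the paper itself has in mind: the LD configuration is introduced precisely as the central projection of the spatial Desargues configuration of Figure \ref{fig:desargues}, with the projection centre $D$ chosen on the plane of $s'$ and $S$, so your lift-and-project proof is a worked-out version of that derivation (and you implicitly correct the typo in the statement, since the conclusion must concern $Z=BC\cap B'C'$). The point to be clear about is that the paper gives no proof of the LD ``theorem'' at all, and deliberately so: in the paper's setting of a general plane of incidence (Axioms P1--P3), LD is a named configuration statement whose validity depends on the plane --- it fails, for instance, on the Moulton plane, while Remark \ref{rem:moufang} recalls that it holds on Moufang planes --- so no proof from the plane axioms alone can exist, and what the paper actually proves about LD are the implications of Propositions \ref{thm:HDimpliesLD} and \ref{thm:wLDimpliesLDinR}. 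Your proposal therefore establishes LD only for the real projective plane, or more generally for any plane of incidence embeddable in a projective $3$-space satisfying the usual space incidence axioms; you state this limitation explicitly, which is exactly the right caveat. Within that scope the argument is sound: the construction of $\widehat S$, $\widehat A$, $\widehat B$, $\widehat C$, the application of the two-plane Desargues theorem, and the identification of the three side intersections with $X$, $Y$, $Z$ under projection from $D$ are all correct, the general-position and degeneracy issues you flag are the standard routine ones, and your observation that the hypothesis $S\in s$ is redundant there (full Desargues already holding) agrees with the paper's remark that LD is a trivial logical consequence of Desargues' theorem on any Desarguesian plane.
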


The configuration of the Little-Desargues theorem is not as symmetric as the Desargues configuration. There are four incident lines (points) on a point (a line), and the rest match only three lines (points). It is clear that the Little-Desargues Theorem is a logical consequence of the Desargues Theorem, but it is not clear whether the Little-Desargues Theorem follows from a restriction of the Desargues Theorem in which we also require that the perspective axis $s$ does not contain the center $S$ of the perspectivity. This last condition is the condition of the so-called homologous-Desargues theorem:

\begin{theorem}[HD theorem]\label{thm:HD}
The lines incident with the pair of points $A,A'$; $B,B'$ and $C,C'$ are go through the same point $S$ and $S,X=AB\cap A'B', Y=AC\cap A'C'$ aren't collinear then $Z=BC\cap B'C'$ is incident with the line $s=XY$.
\end{theorem}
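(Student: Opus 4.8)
The homologous-Desargues statement is precisely Desargues' theorem (Theorem~\ref{thm:desargues}) restricted to the configurations in which the center $S$ of the perspectivity does not lie on the axis $s=XY$; so it holds a fortiori on every plane where the full theorem holds, and it need not hold on an arbitrary plane of incidence --- there it serves as a hypothesis, see Proposition~\ref{thm:HDimpliesLD}. What one should actually prove here is that it holds in every plane of incidence embedded in a projective space obeying the Veblen-Young incidence axioms, and in particular in the real projective plane; the proof I would give is the ``lift into space'' already announced for Figure~\ref{fig:desargues}.

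Here is the plan. Work in the plane $\pi$ carrying the triangles $ABC$ and $A'B'C'$, perspective from $S\in\pi$, and set $X=AB\cap A'B'$, $Y=AC\cap A'C'$, $Z=BC\cap B'C'$. Choose a point $O$ not in $\pi$ and a point $\widehat S$ on the line $OS$ with $\widehat S\ne O,S$. Since $A,A',S$ are collinear, the points $O,S,A,A'$ are coplanar, so the lines $OA$ and $\widehat S A'$ meet in a point $\widehat A$; define $\widehat B=OB\cap\widehat SB'$ and $\widehat C=OC\cap\widehat SC'$ likewise. Using only the space axioms one checks that $\widehat A,\widehat B,\widehat C$ do not lie in $\pi$, hence span a plane $\widehat\pi\ne\pi$; let $\ell=\pi\cap\widehat\pi$. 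By construction $ABC$ and $\widehat A\widehat B\widehat C$ are perspective from $O$, while $A'B'C'$ and $\widehat A\widehat B\widehat C$ are perspective from $\widehat S$. Two triangles lying in distinct planes and perspective from a point are perspective from the line common to those planes --- this ``spatial Desargues'' is a direct consequence of the space incidence axioms --- so, applied to the first pair, it places $AB\cap\widehat A\widehat B$ on $\ell$, and applied to the second pair it places $A'B'\cap\widehat A\widehat B$ on $\ell$. Both of these also lie on the line $\widehat A\widehat B$, which is distinct from $\ell$ (since $\widehat A\notin\pi$), so they coincide; the common point lies on $AB$ and on $A'B'$, hence equals $X$. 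Thus $X\in\ell$, and the same argument gives $Y\in\ell$ and $Z\in\ell$, whence $\ell=XY=s$ and $Z\in s$.

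The main obstacle is not computational --- there is essentially no computation --- but the purely combinatorial bookkeeping: one must check that every intersection written above really exists, and, less trivially, that the lifted figure is genuinely three-dimensional, i.e.\ that $\widehat\pi\ne\pi$, that $\widehat A,\widehat B,\widehat C$ are pairwise distinct and non-collinear, that the lines $\widehat A\widehat B$, $\widehat B\widehat C$, $\widehat C\widehat A$ are all different from $\ell$, and so on. This is exactly the point at which one uses that the ambient structure is a projective space rather than a bare plane of incidence. One should also give a self-contained incidence proof of the spatial ``perspective-from-a-point implies perspective-from-a-line'' lemma instead of quoting it, to stay inside the synthetic framework used throughout. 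Once this is done the statement follows; note that the hypothesis $S\notin s$ is never used in the argument, so the same proof in fact establishes the full Desargues theorem in the real projective plane, the homologous-Desargues statement being the special case relevant in what follows.
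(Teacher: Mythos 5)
You have read the situation correctly, and this is the main point of comparison: the paper itself gives no proof of the HD statement. Like the LD, wLD, wD and cD statements, it is a named configuration statement (called a ``theorem'' only for historical reasons, as the introduction says) that serves purely as a hypothesis in Proposition~\ref{thm:HDimpliesLD}; by Remark~\ref{rem:moufang} it actually fails on some planes of incidence (the Cayley plane over the octonions), so no derivation from the plane axioms alone is possible, and your recognition of this is exactly right. The spatial-lifting argument you sketch is the classical proof of the full Desargues theorem for a plane embedded in a projective space satisfying the Veblen--Young axioms, which is precisely what the paper alludes to when it remarks that in real projective geometry a proof ``can also be obtained by using space axioms'' and describes the projection of the spatial configuration in Figure~\ref{fig:desargues}; your construction (lift one triangle out of the plane via a center $O$ and an auxiliary center $\widehat S$ on $OS$, apply the two-plane ``perspective from a point implies perspective from the common line'' argument, and identify $X,Y,Z$ with points of $\pi\cap\widehat\pi$) is the standard and correct route. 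The remaining obligations you flag --- proving the spatial lemma from the space incidence axioms, verifying $\widehat\pi\ne\pi$, and handling degenerate positions (e.g.\ $A=A'$ or a vertex coinciding with $S$) --- are real but routine, and you correctly note that the resulting proof never uses $S\notin s$, so it yields full Desargues, of which HD is the restriction actually named in the paper. In short: correct diagnosis and a correct (if not fully discharged) proof sketch for the only context in which the statement is provable, consistent with the paper's treatment of HD as an unproven hypothesis rather than a theorem of the plane of incidence.
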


The theory of collineations gives a positive answer to the former question, as it can be read in the book \cite{stevenson} (see Theorem 5.2.5). We now provide an immediate proof of this fact.

\begin{proposition}\label{thm:HDimpliesLD}
The Homologue-Desargues  theorem implies the Little-Desargues theorem.
\end{proposition}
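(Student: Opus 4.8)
The plan is to derive the Little-Desargues configuration (where the perspective axis $s = XY$ passes through the center $S$) from the Homologue-Desargues theorem (which applies precisely when $S$ is *not* on the axis) by an auxiliary-triangle argument in the plane. The obstruction to applying HD directly is exactly the incidence $S \in s$ that defines the LD setting, so the idea is to replace one of the two given triangles by a cleverly chosen third triangle, perspective from the same center $S$, whose perspective axis with each of the original triangles does avoid $S$; then two applications of HD, combined through a shared point, force the missing collinearity $Z \in XY$.

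First I would set up notation following Figure~\ref{fig:littledesargues}: triangles $ABC$ and $A'B'C'$ are perspective from $S$, with $X = AB \cap A'B'$, $Y = AC \cap A'C'$, and $S, X, Y$ collinear on a line $s$; the goal is $Z := BC \cap B'C' \in s$. I would then construct an auxiliary triangle $A''B''C''$ as follows: pick a generic point $P$ on the line $s$ (distinct from $S$, $X$, $Y$), and let $A'' = SA \cap (\text{a suitable line through } P)$, and similarly define $B''$, $C''$ so that $A''B''C''$ is perspective to $ABC$ from $S$ and simultaneously perspective to $A'B'C'$ from $S$. The key design requirement is that the perspective axis of $\{ABC, A''B''C''\}$ and that of $\{A''B''C'', A'B'C'\}$ should each fail to pass through $S$ — this is arranged by choosing $P$ off $s$ in the second construction, i.e. by breaking the degeneracy in a controlled way. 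Each of these two perspectivities is then a genuine Homologue-Desargues situation, so HD applies and yields that the three cross-intersection points of each pair are collinear on a line not through $S$.

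The heart of the argument is the bookkeeping step: I would show that the point $X$ (respectively $Y$) is the intersection of the corresponding sides in *all three* triangle pairs, so the axis lines produced by the two HD applications both pass through $X$ and through $Y$, hence coincide with $s = XY$; and that the point $Z = BC \cap B'C'$ lies on the line joining $BC \cap B''C''$ to $B''C'' \cap B'C'$ — both of which, by the two HD conclusions, lie on $s$. Chasing these incidences then places $Z$ on $s$, which is the Little-Desargues conclusion. This is essentially the classical Hessenberg-style trick of inserting an intermediary triangle, adapted so that the degenerate incidence is quarantined in the final combination rather than in any single invocation of the hypothesis.

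The main obstacle I anticipate is verifying that the auxiliary triangle $A''B''C''$ can be chosen in *general position* relative to both given triangles — i.e. that none of the many accidental coincidences (a vertex falling on a side, two of the auxiliary sides meeting on $s$ prematurely, or $A''B''C''$ degenerating to a line) actually occur. In a bare plane of incidence one cannot simply appeal to dimension counting, so this genericity must be established by hand, exhibiting the needed intersection points explicitly via Axioms P1--P2 and using Axiom P3 to guarantee enough points exist; there may be a small number of special configurations (e.g.\ when two of $S, X, Y, Z$ coincide, or when a side of $ABC$ equals a side of $A'B'C'$) that have to be dispatched as separate trivial cases before the main construction goes through.
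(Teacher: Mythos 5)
There is a genuine gap, and it sits exactly at the ``heart of the argument'' you describe. Your two design requirements are mutually contradictory: you ask that the perspective axes of the pairs $\{ABC,A''B''C''\}$ and $\{A''B''C'',A'B'C'\}$ avoid $S$, and simultaneously that $X$ and $Y$ lie on both of these axes so that they coincide with $s=XY$. But in the Little-Desargues hypothesis $s=XY$ \emph{contains} $S$, so any axis passing through both $X$ and $Y$ is the line $s$ and does pass through $S$. Put differently, if you arrange $X\in A''B''$ and $Y\in A''C''$ so that $X$ and $Y$ are intersection points of corresponding sides for the pair $(ABC,A''B''C'')$, then the center $S$ of that perspectivity is collinear with those two intersection points, which is precisely the situation excluded by the HD hypothesis -- so HD cannot be invoked for that pair after all. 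The final combination step is also circular: the two points $BC\cap B''C''$ and $B'C'\cap B''C''$ both lie on the line $B''C''$, so the line they span is $B''C''$ itself, and asserting $Z=BC\cap B'C'\in B''C''$ is exactly the concurrence of the three corresponding sides that one would still have to prove. (The collineation-theoretic version of this idea, composing two homologies into an elation, works with two \emph{different} centers off $s$ and the \emph{same} axis $s$, not with the same center $S$ and two axes off $S$; and it also needs a converse-Desargues step, so it does not reduce to two direct applications of HD.)

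The paper's proof takes a different and simpler route that you may want to compare: it introduces no auxiliary triangle at all, but re-reads subsets of the given ten points as new perspective pairs, e.g.\ $AXY \overset{A'}{\sim} SB'C'$, for which $AX\cap SB'=B$ and $AY\cap SC'=C$; when $A'\notin BC$ this is a genuine HD situation whose conclusion ($XY\cap B'C'$ lies on $BC$) is literally $Z\in XY$. The degenerate alternatives ($A'\in BC$, then $C'\in AB$, then $B'\in AC$) are handled by further re-labelings of the same configuration, the last case collapsing to all points on one line, where the claim is trivial. If you want to salvage your approach, you would need either this kind of re-labeling trick or the homology-composition argument with centers off $s$; as written, the construction of $A''B''C''$ cannot satisfy the properties your bookkeeping step requires.
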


\begin{proof}
Consider the notation of Figure \ref{fig:HDimpliesLD}.

\begin{figure}[ht]
  \centering
    \includegraphics[scale=1]{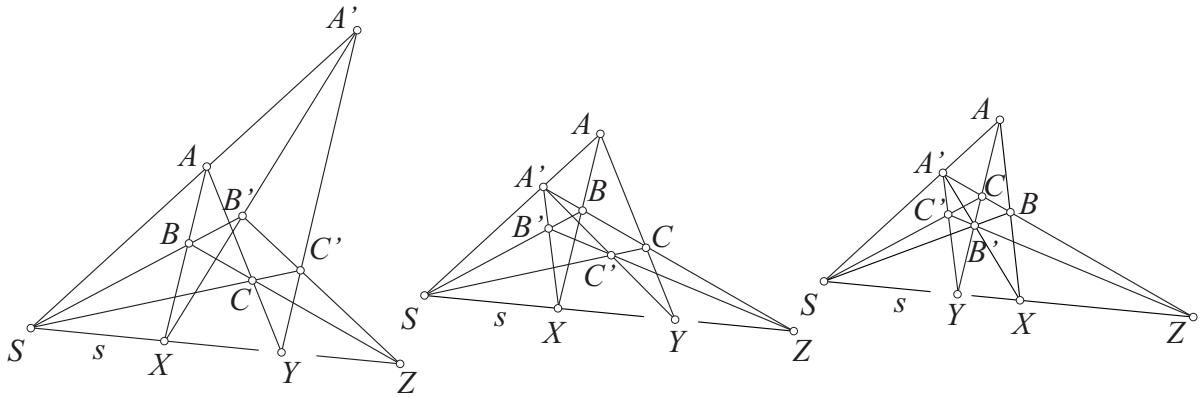}\\
  \caption{Homologue-Desargues theorem implies Little-Desargues theorem}
  \label{fig:HDimpliesLD}
\end{figure}

We need to prove that $Y$ is incident with $s=XZ$. Consider the triangles $AXY$ and $SB'C'$, the center of perspectivity is $A'$. (We use the notation $AXY \overset{A'}{\sim} SB'C'$ for these perspective triangles.) Since $AX\cap SB'=B$ and $AY\cap SC'=C$ there are two possibilities. If $A'$ is not incident with the line $BC$, then we can apply the HD theorem and obtain that the point $XY\cap B'C'$ is incident with the line $BC$, that is, the line that $X$ and $Z=BC\cap B'C'$ is determined incident to $Y$, as we said.

If $A'$ is incident to $BC$, then we cannot apply the HD theorem, but we get a more specialized figure (see the middle image of Figure \ref{fig:HDimpliesLD}). Consider the perspective triangles $CZY \overset{C'}{\sim} SB'A'$. Then $CZ\cap SB'=B$ and $CY\cap SA'=A$ and again we have two options. If $AB$ does not contain $C'$, then the HD theorem means that the point $ZY\cap B'A'$ lies on the line $AB$, so the points $X,Z$ and $ Y$ are collinear. .

In the other case, $C'$ lies on the straight line $AB$ (see the right image in the same figure). Let us now consider the perspective triangles $BXZ \overset{B'}{\sim} SA'C'$. Since $BX\cap SA'=A$ and $BZ\cap SC'=C$ , the two possibilities are that $B'$ is not on $AC$ , or $B'$ is incident on with $AC$. In the first case, applying the HD theorem, we get that $XZ\cap A'C'$ lies on $AC$, and our statement is fulfilled again.

Finally, suppose that $A,C$ and $B'$ are collinear point triples. Then we have $A'XY \overset{A}{\sim} SBC$, which means that $A'X\cap SB=B'$, $A'Y\cap SC=C'$, and $XY\ cap BC$ are collinear if we can apply the HD theorem. If this is not the case, then $A,B'$ and $C'$ are collinear, and therefore $AB'=B'C'$. Since $B'$ is incident with $AC$, then $C$ lies on $AB'=B'C'$, therefore $A,B',C,C'$ are collinear points. Similarly, $C'$ lies on $AB$, so $AC'$ also contains $B$. Thus, $A,B',C,C',B$ are collinear points, therefore all points lie on the same line, and the statement is fulfilled trivially.
\end{proof}

\begin{remark}\label{rem:moufang}
The reverse of the statement is not true. A plane of incidence is called  \emph{Moufang plane} if it can be coordinatized over an alternative division ring. In the Moufang plane, the Little-Desargues theorem holds, but the validity of the Homologue-Desargues theorem depends on whether the coordinate ring is associative or not. Since the Cayley plane over the octonion is a plane of incidence over a non-associative alternative division ring, this is an example of a non-Desarguesian Moufang plane. In other words, in general, the LD theorem doesn't implies the HD theorem.
\end{remark}

If we choose the point $D$ on the intersection of the planes $SFG$ and $BGA'$ in Figure \ref{fig:desargues}, the resulting planar configuration is further specialized, as shown in the right image of Figure \ref{fig:littledesargues}. This configuration induced the weak Little-Desargues theorem (wLD theorem).

\begin{theorem}[wLD theorem]\label{thm:wLD}
The lines incident with the pair of points $A,A'$; $B,B'$ and $C,C'$ are go through the same point $S$, moreover the points $S,X=AB\cap A'B', Y=AC\cap A'C'$, and $A',B,C$ are also collinear then $Z=BC\cap B'C'$ is incident with the line $s=XY$.
\end{theorem}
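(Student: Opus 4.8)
The plan is to prove the wLD theorem by the device the excerpt already uses for Desargues' theorem itself: exhibit the planar wLD configuration as the central projection of a complete Desargues configuration lying in a projective $3$-space. Two preliminary remarks. The hypothesis of the wLD theorem is precisely the hypothesis of Theorem~\ref{thm:LD} (the LD theorem) together with the extra collinearity of $A',B,C$, so wLD holds automatically in any plane where LD holds --- in particular in the real projective plane, which is Desarguesian and in which Desargues' theorem implies the LD theorem. Hence the only real content is to produce a \emph{synthetic} argument in the spirit of the paper's ``successive specialisations'', rather than to invoke LD as a black box.

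First I would set up the spatial picture exactly as in the paragraph preceding the statement: take the complete spatial Desargues configuration $\{S,A,A',B,B',E,E',X,F,G\}$ in a projective $3$-space, the two perspective triangles lying in distinct planes that meet along the axis $XFG$ and being perspective from the centre $S$; then project it centrally from a point $D$ onto the plane $\pi=SAB$, so that $S,A,A',B,B',X$ are fixed and $E,E',F,G$ map to $C,C',Y,Z$. Then I would choose $D$ on the line $(SFG)\cap(BGA')$ as prescribed there, and check the two facts that make the construction work: that $D\in(SFG)$ forces the images of $S,F,G$ (and of $X$) to be collinear, so $S$ lands on the image axis $s=XY$; and that $D\in(BGA')$ forces the collinearity of $A',B,C$. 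One also has to verify that these two planes are genuinely distinct and that their intersection line meets the space off $\pi$, so that a legitimate centre $D$ exists --- which follows from Axiom~P3 and the non-degeneracy of the starting configuration. To pass from ``some such configuration closes'' to ``\emph{every} configuration satisfying the wLD hypothesis closes'' in a Desarguesian-embeddable plane, I would then run the standard Hessenberg-type lifting: given an arbitrary planar wLD configuration, pick a point off the plane, lift one of the triangles into $3$-space along a line through it, apply the spatial Desargues theorem (valid there), and project back down; the two extra collinearities survive because they are ordinary projective incidences.

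The main obstacle --- exactly as in the proof of Proposition~\ref{thm:HDimpliesLD} --- is the degenerate sub-cases of the lifting step: when the auxiliary point or line used to go up into space is forced onto a plane already present in the figure, or when several of the ten configuration points coincide or become collinear (for instance $A',B,C,C'$ all on one line, a collapse the hypothesis practically invites), the construction breaks down. I would handle these by a finite case analysis, cycling through which triangle is opened up into space --- just as in that proof --- and by checking that in the maximally degenerate situation all the points of the configuration lie on a single line, so that $Z\in s$ holds trivially; a lighter version of the same bookkeeping secures the genericity needed to place $D$ in the direct construction above.
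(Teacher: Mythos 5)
You have read the situation essentially correctly, and your main construction is in fact the very argument the paper uses: the paper never proves the wLD statement as a theorem of all planes of incidence (it explicitly says it calls such propositions ``theorems'' only for historical reasons), and it cannot, because by Proposition \ref{thm:wLDequivalenttosPP} the wLD statement is equivalent to the sPP statement, which by the remark about the Moulton plane fails in some planes of incidence. What the paper actually offers in support of the statement is precisely your first step: the wLD configuration is obtained as the central projection of the spatial Desargues configuration of Figure \ref{fig:desargues} from a centre $D$ chosen on the intersection of the planes $SFG$ and $BGA'$, the incidence $D\in SFG$ producing $S\in s$ and $D\in BGA'$ producing the collinearity of $A',B,C$; this justifies the statement in the real projective plane. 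Your additional remarks are correct but largely redundant: the observation that the wLD hypothesis contains the LD hypothesis already settles the statement in every Desarguesian plane, and the Hessenberg-type lifting buys nothing beyond that, since a plane of incidence embeds in a projective $3$-space exactly when it is Desarguesian. The only point where your write-up could mislead is the closing paragraph: no amount of case analysis on degenerate positions can push the argument beyond Desarguesian planes, because the statement is genuinely false in some planes of incidence; the degenerate-case bookkeeping of the kind appearing in Proposition \ref{thm:HDimpliesLD} is needed in the paper for the implications between the minor forms, not for an absolute proof of wLD, which does not exist.
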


The configuration of the wLD theorem contains two points (two lines) with four matching lines (points), and the other points (lines) have three incidences with the lines (points) of the configuration.

\begin{remark} Note that in \ref{fig:desargues} there is only one possibility to choose the centre point $D$ for which the points $A,C$ and $B'$ are collinear when $D= SFG\ cap BGA' \ cap AFB'$. With this choice, we get a planar configuration similar to the image on the right of the figure \ref{fig:HDimpliesLD}. It is clear that further specialization of the planar configuration is impossible with this method, because the position of the line $AB$ and the point $C'$ is independent of the choice of $D$.
\end{remark}

An important observation is that the logical consequence of the wLD theorem is its converse, i.e. the following statement is true:

\begin{theorem}[cwLD theorem]\label{thm:cwLD}
Assume that the points $S=AA'\cap BB'$,$X=AB\cap A'B', Y=AC\cap A'C'$, $Z=BC\cap B'C'$ are collinear and the points $A',B,C$ are also collinear then $S$ incident with the line $CC'$.
\end{theorem}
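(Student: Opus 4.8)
The plan is to derive the statement from the (direct) wLD theorem by the classical perturbation argument used for converses of this kind. Assume first that the configuration is nondegenerate: $ABC$ and $A'B'C'$ are genuine triangles, the line $s$ containing the collinear points $S,X,Y,Z$ and the line $m=BC$ containing the collinear points $A',B,C$ are distinct, and no further coincidences occur among the points involved. All degenerate positions will be handled separately; in each of them $S\in CC'$ holds for trivial reasons (typically the configuration collapses so that all points lie on a single line, or $C=C'$).

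Introduce the auxiliary point $\widehat{C}':=CS\cap A'C'$. Since $\widehat{C}'$ lies on $CS$, the line $C\widehat{C}'$ equals $CS$ and passes through $S$; as $AA'\cap BB'=S$ as well, the triangles $ABC$ and $A'B'\widehat{C}'$ are perspective from the centre $S$, i.e. $ABC\overset{S}{\sim}A'B'\widehat{C}'$. I now verify the remaining hypotheses of the wLD theorem for this pair. The point $AB\cap A'B'$ is unchanged, namely $X$. Because $\widehat{C}'$ lies on $A'C'$, the line $A'\widehat{C}'$ coincides with $A'C'$, so $AC\cap A'\widehat{C}'=AC\cap A'C'=Y$. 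Hence the wLD requirement that the centre together with these two intersection points be collinear becomes exactly ``$S,X,Y$ collinear'', which is part of our hypothesis, and ``$A',B,C$ collinear'' is assumed. Applying the wLD theorem to $ABC\overset{S}{\sim}A'B'\widehat{C}'$ therefore yields that $\widehat{Z}:=BC\cap B'\widehat{C}'$ lies on the line $XY=s$.

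Now $\widehat{Z}$ lies on $BC=m$ and on $s$, and $s\cap m=\{Z\}$, so $\widehat{Z}=Z$. Thus $Z\in B'\widehat{C}'$, while also $Z\in B'C'$, so the lines $B'\widehat{C}'$ and $B'C'$ share the two distinct points $B'$ and $Z$ and therefore coincide; consequently $\widehat{C}'\in B'C'$. Combined with $\widehat{C}'\in A'C'$ and $A'C'\cap B'C'=\{C'\}$ this forces $\widehat{C}'=C'$. Since $\widehat{C}'\in CS$ by construction, we conclude $C'\in CS$, i.e. $S$ is incident with $CC'$, as claimed.

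The substantive content is the single application of the wLD theorem above; the only laborious part is the bookkeeping of degenerate cases — when $CS=A'C'$, when $\widehat{C}'$ collapses onto one of $A',B',C$, when $s=m$, or when two of $S,X,Y,Z$ coincide — and I expect this case analysis, rather than the main line of argument, to be where care is actually required.
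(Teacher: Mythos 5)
Your proof is correct and follows essentially the same route as the paper's: you introduce the auxiliary point $\widehat{C}'=CS\cap A'C'$ (the paper's $C^\star$), apply the wLD theorem to the perspective triangles $ABC$ and $A'B'\widehat{C}'$, and identify $BC\cap B'\widehat{C}'$ with $Z$ to force $\widehat{C}'=C'$. Your version is in fact slightly more careful than the paper's in spelling out why the identification forces $\widehat{C}'=C'$ and in flagging the degenerate positions.
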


\begin{proof}
Consider the middle image in Fig. \ref{fig:HDimpliesLD}. Let $C^\star$ be the intersection of $SC$ and $A'C'$ and let $Z^\star:=B'C^\star \cap BC$ . Then $ABC \overset{S}{\sim} A'B'C^\star$, $AC\cap A'C^\star=Y$, $AB\cap A'B'=X$, where $ S,X,Y$ are collinear (and so are $A',B,C$). Applying the wLD theorem, we get that the points $BC\cap B'C^\star=R^\star$ $X,Z$ are collinear, which can only happen if $R=R^\star$ and thus $ C'=C^\star$. Therefore, the triplet $S,C,C'$ is also collinear.
\end{proof}

We do not know whether the LD theorem follows from the wLD theorem or not, but such a statement can be verified in a space with a metric.

\begin{proposition}\label{thm:wLDimpliesLDinR}
In the Euclidean model of the real projective plane, the Little-Desargues theorem can be proved by using only the weak Little-Desargues theorem.
\end{proposition}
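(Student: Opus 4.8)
The plan is to work inside a fixed affine chart of the real projective plane and to use only the \emph{real-coordinate} features of that chart --- continuity, and the fact that the incidences in play are cut out by real polynomials --- together with the weak Little-Desargues theorem (Theorem~\ref{thm:wLD}) as the sole configuration input. Fix the data of the Little-Desargues hypothesis: the centre $S$, the triangle $ABC$, and the line $s$ with $S\in s$, and put $X:=AB\cap s$, $Y:=AC\cap s$, $Z_0:=BC\cap s$. Every perspective partner of $ABC$ from $S$ realising these diagonal points is obtained by choosing $A'$ on the line $SA$ and setting $B':=SB\cap XA'$ and $C':=SC\cap YA'$; then $Z:=BC\cap B'C'$ is a function $\varphi$ of the position of $A'$ on $SA$, and the LD theorem is exactly the assertion $\varphi\equiv Z_0$.

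First I would observe that, in the coordinates of the chart, $\varphi$ is a rational map from the projective line $SA$ to the projective line $BC$ of degree at most two: the maps $A'\mapsto B'$ and $A'\mapsto C'$ are perspectivities (degree one), the join $B'C'$ depends quadratically on $A'$, and intersecting with the fixed line $BC$ does not raise the degree. Hence the ``defect'' $\delta(A'):=\varphi(A')-Z_0$, measured in an affine coordinate along $BC$, is a real rational function whose numerator has degree at most two. Next I would locate the zeros of $\delta$ furnished by the wLD theorem. When $A'=SA\cap BC$ the extra incidence ``$A',B,C$ collinear'' holds, so Theorem~\ref{thm:wLD} gives $Z\in s$, i.e.\ $\delta=0$ there. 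Interchanging the roles of the two triangles, the wLD theorem applies again at every position of $A'$ with $A\in B'C'$, and ``$A\in B'C'$'' is the vanishing of a real quadratic form that automatically has $A'=S$ among its roots (there $B'=C'=S$), hence also a second real root $A'=P$, at which $\delta(P)=0$. A separate continuity computation at the degenerate value $A'=S$ --- where $B'C'$ is interpreted as a limiting line, which one checks to be $s$ --- supplies a third vanishing of $\delta$. Since a nonzero rational function whose numerator has degree at most two cannot vanish at three distinct points of the projective line, $\delta\equiv 0$, which is the LD theorem.

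The step I expect to be the real obstacle is making this ``three zeros'' count hold in \emph{every} configuration rather than just generically: the two auxiliary zeros may coincide with $SA\cap BC$ (when that point happens also to satisfy $A\in B'C'$) or collapse onto the degenerate value $A'=S$, and a degree-two real rational function is in general neither constant nor surjective, so the count can fail in special positions. This is precisely where the metric (equivalently, the completeness of $\mathbb{R}$) enters: having established $\varphi\equiv Z_0$ for the configurations where the three zeros are genuinely distinct, one passes to the remaining configurations by a limiting argument, since $\varphi$ and all the incidences depend continuously on the data and each degenerate configuration is approached by generic ones. All of the geometric content is carried by the two applications of the wLD theorem; the remaining effort is the bookkeeping of degrees and of these degenerate limiting positions.
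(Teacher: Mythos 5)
Your proposal is correct in outline, but it takes a genuinely different route from the paper. The paper's proof first sends the axis of perspectivity to the line at infinity of the Euclidean model, so that the LD hypothesis becomes a statement about triangles with pairwise parallel joins and two pairs of parallel sides; it then connects $ABC$ to $A'B'C'$ by a finite chain of translated copies (this is where Archimedes' axiom is invoked), arranges each consecutive pair of the chain to be in wLD position, and concludes by transitivity of parallelism. You instead fix the data $(S,A,B,C,s)$, let $A'$ run over the pencil on $SA$, note that $B'$ and $C'$ depend linearly and $Z=BC\cap B'C'$ quadratically on the parameter of $A'$, so that the desired incidence $Z\in s$ is the vanishing of a quadratic form $F$, and you harvest zeros of $F$: one from wLD at $A'=SA\cap BC$, one from wLD with the two triangles interchanged at the second root of the quadratic condition $A\in B'C'$, and one at the degenerate value $A'=S$. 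Two remarks on your plan. First, the continuity computation at $A'=S$ can be replaced by a purely algebraic observation: at that parameter value $B'$ and $C'$ both reduce to $S$, so $B'\times C'$, and hence $F$, acquires the corresponding linear factor automatically; your claim that the limiting line of $B'C'$ is $s$ is in fact true (it follows from $S,X,Y\in s$ by a direct first-order computation, with no circular appeal to LD), but it is not needed, and then two genuinely distinct wLD zeros already force $F\equiv0$. Second, the degenerate positions you flag are where the real work lies: one must check that the bad coincidences ($S\in BC$, the second root of $A\in B'C'$ falling at $S$ or at $SA\cap BC$, or a collapsed wLD configuration) cut out a proper algebraic subset of the data space, and the limiting argument must perturb the whole data $(S,A,B,C,s)$, not only $A'$, using that the conclusion $Z\in s$ is a closed condition; this density-plus-continuity step is where the real structure enters your proof, playing the role that the Archimedean chain of translates plays in the paper's. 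With that bookkeeping supplied, your argument is complete, arguably more economical than the paper's, at the price of being computational rather than configurational.
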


\begin{figure}[ht]
  \centering
    \includegraphics[scale=1]{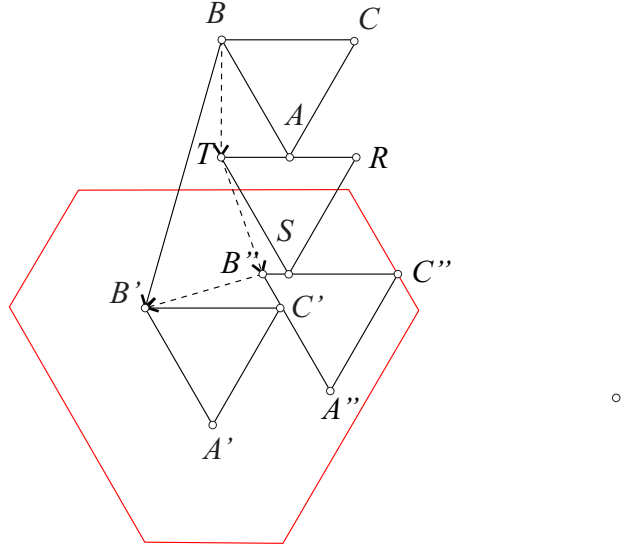}\\
  \caption{In the real projective plane the weak Little-Desargues theorem implies the Little-Desargues theorem}\label{fig:wLDimpliesLD}
\end{figure}

\begin{proof}
Suppose that the axis of perspectivity is the line $z$ at infinity. In this case, two triangles are axially perspective with respect to $z$ if their corresponding sides are parallel to each other. Two triangles satisfy the conditions of the LD theorem if they are parallel translates of each other, and satisfy the conditions of the wLD theorem if they are translates of each other, and one of the vertices of the first triangle falls on the side line of the other triangle that connects the vertices corresponding to the other two vertices of the first triangle. To prove the statement, we give a chain of triangles $ABC=A_0B_0C_0\cong ... \cong A_nB_nC_n=STR\cong A''B''C''$ which holds the wLD conditions such that the last triangle $A''B''C''$ also contains the wLD conditions for $A'B'C'$. Since Archimedes' axiom applies to the Euclidean plane, this process can be done with a finite number of triangles. In fact, consider a chain of translated copies of triangles $ABC=A_0B_0C_0\cong...\cong A_nB_nC_n=STR$, where for two consecutive triangles, one of the vertices of the second triangle fits on one of the sides of the one before it, and one of the vertices of the last triangle (say $A_n=S$) is in the closed domain consisting of the points of the translated copies intersecting the triangle $A'B'C'$. Define the triangle $A''B''C''$ to be the translation of $ABC$ with the property that its one side contains the point $S$ and one of its sides contains a vertex of the triangle $A'B'C'$. Since the parallelism is transitive, with the notations in the figure \ref{fig:wLDimpliesLD}, we get that $A''B''$, $A'B'$ ; and $A''C''$, $A'C'$ are parallel pair of segments, so $A'A''$, $B'B''$ and $C'C''$ are also parallel. Since $BC$ is parallel to $B''C''$, we get that $BC$ is parallel to $B'C'$ if and only if $B''C'' $ is parallel to $B'C'$. Since the configuration corresponding to the triangles $A'B'C'$ and $A''B''C''$ is a wLD configuration, this means that the LD theorem follows from the wLD theorem, as stated.
\end{proof}

In the figure \ref{fig:desargues}, we can choose point $D$ either on the $BGA'$ plane (but usually not on the $SFG$ plane) or on the $AA'G$ plane, to define new special cases of Desargues' theorem. The first planar configuration leads to the \emph{weak Desargues theorem } (wD theorem) and the second to the \emph{collapsing Desargues theorem} (cD theorem).

\begin{theorem}[wD theorem]\label{thm:wD}
The lines incident with the pair of points $A,A'$; $B,B'$ and $C,C'$ are go through the same point $S$, moreover the points  $A',B,C$ are collinear then $ X=AB\cap A'B', Y=AC\cap A'C'$, and $Z=BC\cap B'C'$ are also collinear.
\end{theorem}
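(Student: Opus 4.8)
The statement to prove (the wD theorem) adds to the Little-Desargues hypothesis the extra incidence $A' \in BC$, and asks for the collinearity of $X, Y, Z$ without assuming any of these three points lies on a line through $S$. My plan is to derive the wD theorem as a consequence of the wLD theorem (Theorem \ref{thm:wLD}) together with the converse statement cwLD (Theorem \ref{thm:cwLD}), by inserting an auxiliary triangle, exactly in the spirit of the proof of Proposition \ref{thm:HDimpliesLD}. The point is that wLD lets us conclude $Z \in XY$ \emph{provided} $S, X, Y$ are already collinear, so the real work is to reduce the general configuration to that special position by replacing one of the two triangles with a perspective copy whose relevant diagonal point has been forced onto the line $XY$.

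\textbf{Key steps.} First I would set up the standard labelling: $S = AA' \cap BB' = AA' \cap CC'$, with $A', B, C$ collinear on a line $\ell$, and $X = AB \cap A'B'$, $Y = AC \cap A'C'$, $Z = BC \cap B'C'$ — noting $Z \in \ell$ since $BC = \ell$. Next, on the line $SC$ I would introduce a new point $C^\star$ defined so that the triangle $A'B'C^\star$ is in \emph{wLD position} with $ABC$: concretely, let $Y^\star = AC \cap XZ$ (the intersection of $AC$ with the candidate axis $s = XZ$), then let $C^\star = SC \cap A'Y^\star$, so that by construction $AC \cap A'C^\star = Y^\star$ lies on $XZ$ and $AB \cap A'B' = X$ lies on $XZ$; thus for the triangles $ABC$ and $A'B'C^\star$ the points $X, Y^\star, Z$ are collinear and (since $C = SC^\star \cap BC$ with $A' \in BC$) the wLD hypotheses are met, yielding $BC \cap B'C^\star \in XZ$. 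Combined with the cwLD theorem applied appropriately, this pins down $C^\star$: the chain of incidences forces $C^\star$ to lie on $A'C'$, hence $C^\star = SC \cap A'C' $; but then $Y^\star = AC \cap A'C^\star = AC \cap A'C' = Y$, so $Y \in XZ$, i.e. $X, Y, Z$ are collinear, which is the claim. I would handle the degenerate sub-cases (when one of the auxiliary lines is undefined, e.g. $AC \parallel XZ$ in the sense that they meet at a configuration point, or when $A'$ coincides with $B$ or $C$, or when all points fall on a single line) separately and briefly, as is done at the end of the proof of Proposition \ref{thm:HDimpliesLD}.

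\textbf{Main obstacle.} The delicate point will be verifying that the auxiliary triangle $A'B'C^\star$ genuinely satisfies \emph{all} the hypotheses of the wLD theorem — in particular that the triple playing the role of ``$A', B, C$'' in Theorem \ref{thm:wLD} is collinear for the \emph{new} pair of triangles, and that $S$ is simultaneously the center of perspectivity and collinear with the two diagonal points $X$ and $Y^\star$. Getting the correspondence of vertices right (which vertex of which triangle must lie on which side line) is where the argument can silently break, because wLD is not symmetric in its three vertices; I expect to need to try the three cyclic choices of which triangle-vertex to perturb and check which one makes the collinearity $A', B, C$ (of the hypothesis) automatic from $A' \in BC$. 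The second, milder difficulty is ensuring the construction of $C^\star$ does not produce a point already forced to equal $S$, $A'$, or $C$, which would collapse the perspectivity; ruling this out amounts to observing that the configuration points are in general position unless we are in one of the explicitly enumerated degenerate cases.
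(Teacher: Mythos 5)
There is a genuine gap, and it is worth noting first that the paper does not prove the wD statement from the incidence axioms at all: wD is a configuration statement which the paper handles only by recording (via Exercise 8, p.~203 of \cite{stevenson}, and Figure \ref{fig:wDcD}) that LD, wD and cD are mutually equivalent. In particular wD is equivalent to LD, so your claimed derivation of wD from the wLD theorem (plus cwLD, which the paper itself deduces from wLD) would prove ``wLD implies LD'' in an arbitrary plane of incidence --- precisely one of the two questions the paper explicitly leaves open, and which it can settle only in the Euclidean model of the real projective plane using an Archimedean chain of translated triangles. That alone signals that some step must fail.

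The step that fails is the application of the wLD theorem to the auxiliary pair $ABC$, $A'B'C^\star$. The hypothesis of Theorem \ref{thm:wLD} is that the \emph{center} $S$ is collinear with the two diagonal points $X$ and $Y$ (the axis passes through the center), together with $A',B,C$ collinear; its conclusion is $Z\in XY$. In your construction you replace this hypothesis by the collinearity of $X$, $Y^\star$, $Z$, which holds trivially because you \emph{defined} $Y^\star=AC\cap XZ$ --- but that is not the wLD hypothesis. What wLD needs for the new triangles is $S\in XY^\star=XZ$, and in the wD configuration $S$ does not in general lie on the axis $XZ$: the whole point of wD is that the extra incidence is $A'\in BC$ \emph{instead of} $S$ on the axis. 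Keeping the same center $S$ and merely sliding $C'$ to $C^\star$ along $SC$ cannot create the missing incidence, since $S$, $X$ and the line $XZ$ are untouched by that replacement. The subsequent cwLD step inherits the same defect, because the hypothesis of Theorem \ref{thm:cwLD} likewise demands that $S$ be collinear with $X,Y,Z$; so the ``pinning down'' of $C^\star$ on $A'C'$ is not justified. To make progress along the paper's lines you would have to re-pair the triangles so that the center of the new perspectivity really does lie on the line of two of its diagonal points (as is done in the proof of Proposition \ref{thm:HDimpliesLD}), which is exactly what the reconstruction of the LD--wD--cD equivalences from Figure \ref{fig:wDcD} accomplishes; deriving wD from wLD alone is not something the paper claims, and should not be expected to be provable by a short synthetic argument.
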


\begin{theorem}[cD theorem]\label{thm:cD}
The lines incident with the pair of points $A,A'$; $B,B'$ and $C,C'$ are go through the same point $S$, let be $ X=AB\cap A'B', Y=AC\cap A'C'$, and $Z=BC\cap B'C'$ if moreover the points  $A,A',Z$ are collinear then $X,Y$ and $Z$ are also collinear.
\end{theorem}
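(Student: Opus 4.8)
The plan is to deduce the cD theorem from the HD theorem — and, in one sub-case, from the LD theorem, which is available by Proposition \ref{thm:HDimpliesLD} — by means of an auxiliary perspectivity, exactly in the style of the proof of Proposition \ref{thm:HDimpliesLD}.

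The key observation is that the triangles $AXY$ and $SB'C'$ are perspective from $A'$. Indeed, $X=AB\cap A'B'$ lies on $A'B'$, so the line $XB'$ equals $A'B'$ and passes through $A'$; likewise $Y=AC\cap A'C'$ lies on $A'C'$, so $YC'=A'C'$ passes through $A'$; and the hypothesis that $A$, $A'$, $S$ are collinear says that $AS$ passes through $A'$ as well. Hence $AXY\overset{A'}{\sim}SB'C'$. Intersecting corresponding sides, $AX\cap SB'=AB\cap BB'=B$ and $AY\cap SC'=AC\cap CC'=C$, so two of the three pairs of corresponding sides meet on the line $BC$, while the third pair meets in the point $W:=XY\cap B'C'$. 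Thus it suffices to show that $W$ lies on $BC$: then $W$ is incident with both $BC$ and $B'C'$, hence $W=BC\cap B'C'=Z$, and therefore $Z\in XY$, i.e. $X$, $Y$, $Z$ are collinear.

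To put $W$ on $BC$ I would distinguish two cases according to the position of $A'$. If $A'$ is not incident with $BC$, then the centre $A'$ of the perspectivity $AXY\overset{A'}{\sim}SB'C'$ is not on the line joining its two ``axis points'' $B$ and $C$, so the HD theorem applies to this perspectivity and yields exactly $W\in BC$. If $A'$ is incident with $BC$, the HD theorem is not available, but in this situation $A'$, $B$, $C$ are collinear, which is precisely the hypothesis of the LD theorem for $AXY\overset{A'}{\sim}SB'C'$; applying the LD theorem (legitimate by Proposition \ref{thm:HDimpliesLD}) again gives $W\in BC$. In both cases $Z\in XY$.

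The main obstacle is, as in the proof of Proposition \ref{thm:HDimpliesLD}, the degenerate configurations in which $AXY$ or $SB'C'$ is not a genuine triangle or the perspectivity is improper — for instance when $A$, $X$, $Y$ are collinear (which forces $A\in A'B'$ or $A\in A'C'$), when $S$, $B'$, $C'$ are collinear, when $A'$ lies on one of $AB$, $AC$, $BB'$, $CC'$, or when two of the ten points coincide. In each such case I would either show that the offending incidence, together with the hypothesis that $A$, $A'$, $Z$ are collinear, pushes all the relevant points onto the single line $AA'$, so that $Z\in XY$ holds trivially, or replace $AXY\overset{A'}{\sim}SB'C'$ by one of the further specialized perspectivities $CZY\overset{C'}{\sim}SB'A'$ or $BXZ\overset{B'}{\sim}SA'C'$ and rerun the argument, exactly the descending chain of sub-cases already carried out for Proposition \ref{thm:HDimpliesLD}. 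The generic argument above is immediate; only this finite case analysis is delicate. (Since the cD theorem is in any event a trivial consequence of Desargues' Theorem \ref{thm:desargues}, it holds in particular in the real projective plane; the content of the above is that it already follows from the weaker HD hypothesis.)
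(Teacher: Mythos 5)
Your two-case argument (if $A'\notin BC$ apply the HD theorem to $AXY\overset{A'}{\sim}SB'C'$; if $A'\in BC$ apply the LD theorem, available by Proposition \ref{thm:HDimpliesLD}) is sound, and the deferred degenerate cases are treated at the same level of detail as in the paper's own proofs, so there is no logical error. But it is a genuinely different route from what the paper does with the cD statement, and it establishes less. The paper gives no derivation of cD from HD at all: since cD, like LD, is not a consequence of the incidence axioms, the paper places it by recording (Exercise 8 on p.~203 of \cite{stevenson}, reconstructible from Figure \ref{fig:wDcD}) that LD, wD and cD are mutually \emph{equivalent}. Note that your proof never uses the defining extra incidence of cD, namely that $A,A',Z$ are collinear; what you have actually written out is the ``HD implies the full Desargues conclusion'' argument that is already the content of the proof of Proposition \ref{thm:HDimpliesLD}, with cD falling out as a trivial special case. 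That buys a quick proof under a strong hypothesis, but HD is strictly stronger than LD (see Remark \ref{rem:moufang}: on a Moufang plane such as the Cayley plane LD holds while HD fails), so your derivation says nothing about cD on precisely the planes where these little configuration theorems are of interest, whereas the equivalence cD~$\Leftrightarrow$~LD~$\Leftrightarrow$~wD covers them. A derivation in the paper's spirit would have to start from LD alone and use the hypothesis $Z\in AA'$ to produce an auxiliary perspectivity whose center lies on the line of two intersection points of corresponding sides; because your argument ignores that hypothesis, it cannot be downgraded from HD to LD in this way.
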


Exercise 8 in \cite{stevenson} on page 203 states that the theorems LD, wD, and cD are equivalent to each other. Direct proofs of these equivalences can be reconstructed based on Figure \ref{fig:wDcD}.

\begin{figure}[ht]
  \centering
    \includegraphics[scale=1]{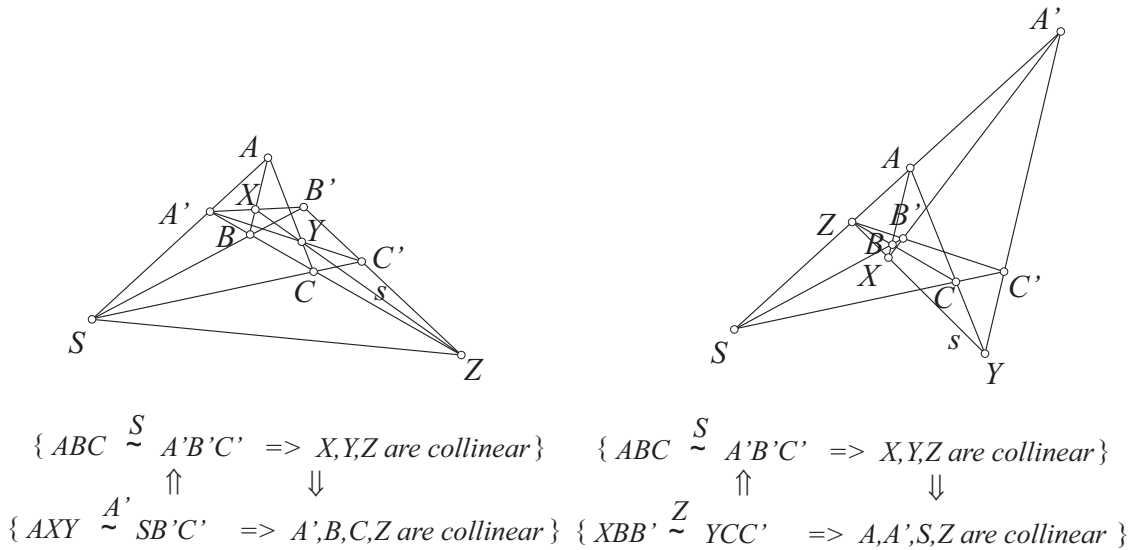}\\
  \caption{The three types of specialization are equivalent to each other.}\label{fig:wDcD}
\end{figure}

\section{Specializations of Pappus' theorem and the corresponding configurations}

Our first observation is that the configuration of the wLD theorem can also be considered a specialized configuration of Pappus' theorem. If we rewrite the notation of the original left-hand configuration in the figure \ref{fig:wLDandSPP} and change the position of the dashed line that indicates the conclusion in the theorem, we can formulate a Pappus-type theorem whose configuration is the right-hand image of the figure. Therefore, this configuration can be considered a "bridge" between Desargues' theorem and Pappus' theorem.

\begin{figure}[ht]
  \centering
    \includegraphics[scale=1]{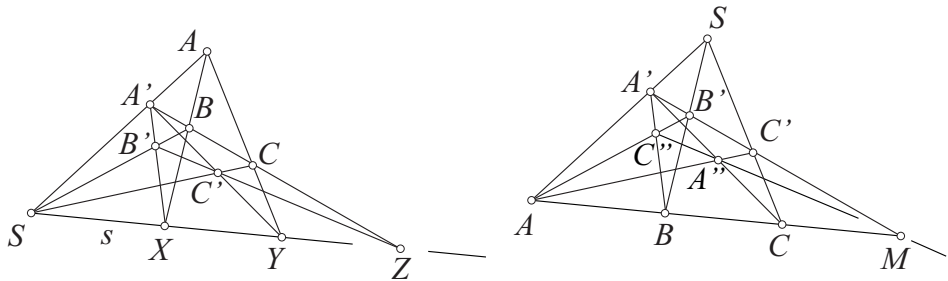}\\
  \caption{The "bridge configuration" between the Desargues' theorem and Pappus' theorem.}\label{fig:wLDandSPP}
\end{figure}

\begin{theorem}[sPP theorem]\label{thm:sPP}
Assume that $A,B,C$ are three points of one line and $A',B',C'$ are three points of another line, where $M$ is a common point of the two lines. Assume that the triples $S,A,A'$ $S,B,B'$ and $S,C,C'$ are also collinear. Then the points $M$, $C''=AB'\cap A'B$ and $A''=BC'\cap B'C$ are collinear.
\end{theorem}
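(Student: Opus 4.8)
The plan is to deduce the statement from the weak Little--Desargues theorem (Theorem \ref{thm:wLD}): the sPP configuration is the bridge configuration of Figure \ref{fig:wLDandSPP}, read with the roles of hypothesis and conclusion interchanged. Write $\ell$ for the line $ABC$ and $\ell'$ for the line $A'B'C'$, so that $M=\ell\cap\ell'$, and set $N:=C''A''\cap\ell'$. Since $N$ lies on $\ell'$ by construction, the claim $M\in C''A''$ is equivalent to $N=M$, hence to $N\in\ell$; I will prove the latter.

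The decisive step is to exhibit the pair of triangles on which wLD can be used, namely $SA'C'$ and $B'C''A''$. These are perspective from the point $B$: the line $SB'$ passes through $B$ because $S,B,B'$ are collinear by hypothesis, the line $A'C''$ passes through $B$ because $C''=AB'\cap A'B$ lies on $A'B$, and the line $C'A''$ passes through $B$ because $A''=BC'\cap B'C$ lies on $BC'$. Moreover the three pairs of corresponding sides meet in
\[
SA'\cap B'C''=A,\qquad SC'\cap B'A''=C,\qquad A'C'\cap C''A''=N,
\]
the first because $A\in SAA'$ and $A\in AB'=B'C''$, the second because $C\in SCC'$ and $C\in B'C=B'A''$. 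Thus both incidences required by wLD hold: the centre $B$ lies on the line $AC=\ell$ determined by two of these three points (as $A,B,C$ are collinear), and the vertex $B'$ of the second triangle lies on the side $A'C'=\ell'$ of the first (as $A',B',C'$ are collinear). Applying wLD to $SA'C'\overset{B}{\sim}B'C''A''$ therefore gives that the third intersection point $N$ is incident with $\ell$. Consequently $N\in\ell$ together with $N\in\ell'$ forces $N=\ell\cap\ell'=M$, so $M$ is incident with $C''A''$, i.e.\ $M$, $C''$, $A''$ are collinear, as claimed.

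The point where real care is needed is the identification of the correct triangle pair $SA'C'$, $B'C''A''$ and the verification that, for this pair, the perspectivity centre $B$ and the extra vertex $B'$ play at the same time the two distinguished roles that wLD requires; this double coincidence is exactly what the perspectivity hypothesis ($S,A,A'$; $S,B,B'$; $S,C,C'$ collinear) of the sPP configuration supplies, and it is why the weak Little--Desargues theorem --- not merely the stronger LD or wD theorem --- already suffices. The remaining effort is the routine case analysis of degenerate positions: when $S$ lies on $\ell$ or on $\ell'$, when two of $A,B,C$ or of $A',B',C'$ coincide, when $C''=A''$, or when some of the auxiliary lines coincide, in each of which the given points collapse onto at most two lines and the assertion is elementary.
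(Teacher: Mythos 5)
Your proposal is correct and coincides with the paper's own argument: the paper establishes the sPP theorem (as the forward direction of Proposition \ref{thm:wLDequivalenttosPP}) by applying the weak Little--Desargues theorem to exactly the same triangle pair, written there as $C''B'A''\overset{B}{\sim}A'SC'$, with the same side intersections $A$ and $C$ and the same use of the collinearities of $A,B,C$ and $A',B',C'$. Your version merely writes the triangles in the opposite order and spells out the identification $N=M$ a bit more explicitly, so it is essentially the same proof.
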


We call this theorem \emph{strong perspective Pappus theorem} (spP theorem) because its logical consequence is the \emph{perspective Pappus theorem}:

\begin{theorem}[pP theorem]\label{thm:pP}
Assume that $A,B,C$ are three points of one line and $A',B',C'$ are three points of another line. Assume that the triples $S,A,A'$ $S,B,B'$ and  $S,C,C'$ are also collinear. Then the points $C''=AB'\cap A'B$, $B''=AC'\cap A'c$ and $A''=BC'\cap B'C$ are collinear.
\end{theorem}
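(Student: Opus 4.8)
The plan is to derive the pP theorem from the spP theorem by a symmetry argument. Observe that the spP configuration is almost symmetric in the roles of the two lines $g=ABC$ and $g'=A'B'C'$: the hypothesis "$S,A,A'$; $S,B,B'$; $S,C,C'$ collinear" is symmetric under swapping primed and unprimed letters, and $M=g\cap g'$ is obviously symmetric as well. The only asymmetry is in the conclusion, where we single out the two cross-points $C''=AB'\cap A'B$ and $A''=BC'\cap B'C$, together with $M$. Applying the spP theorem as stated gives that $M, C'', A''$ are collinear on some line, call it $m$.

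First I would apply the spP theorem a second time, but with the labels of $B$ and $C$ (and simultaneously $B'$ and $C'$) interchanged. The hypotheses are preserved under this relabeling (the collinear triples through $S$ just get reordered, and $M$ is unchanged). Under the swap $B\leftrightarrow C$, $B'\leftrightarrow C'$, the point $C''=AB'\cap A'B$ becomes $AC'\cap A'C = B''$, while $A''=BC'\cap B'C$ is fixed (it goes to $CB'\cap C'B$, the same point). So the second application yields that $M, B'', A''$ are collinear. Hence both $C''$ and $B''$ lie on the line $MA''$; together with $A''$ itself this shows $A'', B'', C''$ are all collinear (on the line through $M$ and $A''$), which is exactly the pP theorem.

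The main obstacle — and the only real content beyond the relabeling bookkeeping — is handling degeneracies, i.e. checking that the two applications of spP are legitimately invoking the same configuration and that the points $M, A'', B'', C''$ are well-defined and the "collinear" conclusions are not vacuous. Specifically, one must rule out (or argue harmless) the cases where $A''$ coincides with $M$, where $A'', B'', C''$ are not distinct, or where $g=g'$; in the non-degenerate range the line $MA''$ is well-defined and the argument above is airtight. I would dispose of these by noting that if $A''=M$ or two of the cross-points coincide, the collinearity of $A'',B'',C''$ holds trivially, and if $g=g'$ then all the points lie on that single line. One should also confirm that swapping $B,C$ does not accidentally force a forbidden incidence in the hypotheses of spP; since the spP statement as given imposes no genericity beyond the stated collinearities, this is immediate. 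So the proof is essentially: apply spP twice with a $B\leftrightarrow C$ relabeling, intersect the two resulting lines through $M$, and observe they must coincide.
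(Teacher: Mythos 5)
Your argument is correct and coincides with the paper's (implicit) reasoning: the paper offers no separate proof of the pP theorem, simply declaring it a logical consequence of the sPP theorem, and the intended derivation is precisely your two applications of sPP related by the relabeling $B\leftrightarrow C$, $B'\leftrightarrow C'$, which give that $C''$ and $B''$ both lie on the line $MA''$. The only soft spot is your claim that the case $M=A''$ is trivial---it is not quite, but it is repaired by a third application of sPP with $A\leftrightarrow B$, $A'\leftrightarrow B'$ (yielding $M$, $C''$, $B''$ collinear), after which every case with some cross-point distinct from $M$ is covered and only the fully degenerate situation $A''=B''=C''=M$ remains, where collinearity is automatic.
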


This theorem is called "Specialized Pappus Theorem One" by \cite{stevenson} (see Exercise 3 on page 212) and is true on the Desarguesian plane.
Notice that in the Moulton plane it is easy to specify the position of the points $S,A,B,C,A',B',C'$ in such a way that the assertion of the pP theorem is fulfilled, but the points $A'',B '',C''$ and $M$ are not collinear. This shows that the sPP theorem is stronger than the PP theorem. The following theorem proves the legitimacy of the name "bridge configuration".

\begin{proposition}\label{thm:wLDequivalenttosPP}
The weak Little-Desargues  theorem is equivalent to the strong perspective Pappus theorem.
\end{proposition}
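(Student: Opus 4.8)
The plan is to establish the equivalence by unwinding the identification of the two configurations suggested by Figure \ref{fig:wLDandSPP}, so that the hypotheses and conclusion of each theorem translate into the hypotheses and conclusion of the other under a suitable renaming of the ten points of the common configuration. Concretely, I would start from the wLD configuration: points $S$, $A$, $A'$, $B$, $B'$, $C$, $C'$ with $S\in AA'\cap BB'\cap CC'$, the collinearity of $S$, $X=AB\cap A'B'$, $Y=AC\cap A'C'$, and the collinearity of $A'$, $B$, $C$; the conclusion is $Z=BC\cap B'C'\in XY$. I would then read off which of these ten points plays the role of which point in the sPP statement (three collinear points on one line, three on another meeting at a common point $M$, a perspective center $S$, and the two ``cross'' points $A''$, $C''$). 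The key geometric fact making the bridge work is that in the wLD configuration the line $A'BC$ and the line $XY$ (which by hypothesis passes through $S$) are exactly the two lines of a Pappus configuration, with $M$ their intersection, while $S$ becomes the Pappus perspectivity center; conversely the two ``base lines'' $ABC$ and $A'B'C'$ of the sPP configuration, together with $S$, furnish the Desargues-type perspectivity $S\in AA'\cap BB'\cap CC'$.

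First I would set up the dictionary carefully and verify, incidence by incidence, that the hypotheses of wLD imply all hypotheses of sPP under this relabeling, then check that the conclusion of sPP (collinearity of $M$, $C''$, $A''$) is precisely the conclusion of wLD ($Z\in XY$) read in the new names — this gives one direction, ``wLD implies sPP.'' For the reverse direction I would run the same dictionary backwards: given an sPP configuration, define the points that will serve as $S$, $A$, $A'$, $B$, $B'$, $C$, $C'$, $X$, $Y$, $Z$ for a wLD configuration, check that the sPP hypotheses (two triples of collinear points with common point $M$, and the three perspectivity collinearities through $S$) translate into exactly the wLD hypotheses (common perspective point, collinearity of $S$ with the two diagonal points, collinearity of the auxiliary triple), and conclude that the sPP conclusion yields $Z\in XY$. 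Because the identification is a bijection of configurations rather than a one-way specialization, both implications should come out symmetrically once the dictionary is pinned down; I would also keep an eye on degenerate cases (coincidences among the ten points, or a point falling on a line it is not ``supposed'' to) and dispose of them as in the proof of Proposition \ref{thm:HDimpliesLD}, noting that in such cases the relevant collinearity holds trivially.

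The main obstacle I anticipate is purely bookkeeping but genuinely delicate: getting the correspondence of the ten labeled points exactly right so that \emph{every} required incidence on one side is literally one of the given or derived incidences on the other side, with no gap. In particular one must check that the point called $M$ on the Pappus side really is forced to be the intersection of the correct two lines in the Desargues-side picture (this is where ``moreover the points $S, X, Y$ are collinear'' in wLD is used — that collinearity is what makes $S$ lie on the second Pappus base line, or equivalently supplies the common point $M$), and that the auxiliary collinearity ``$A',B,C$'' matches up with ``$A,B,C$ on one line'' after relabeling. A secondary subtlety is orientation of the conclusion: the wLD conclusion asserts $Z$ lies on the \emph{specific} line $XY$, so in translating to sPP one must make sure the asserted collinearity $M, C'', A''$ is about the same line and not merely about some line through two of the three points — here Axiom P1 (two points determine a line) is what guarantees the translation is faithful. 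Once the dictionary is fixed, the proof is a direct verification in both directions with a short remark handling degeneracies.
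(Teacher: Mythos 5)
There is a genuine gap, and it is not one of bookkeeping: the dictionary you propose cannot exist. It is true (and this is exactly the paper's motivation for the name ``bridge configuration'') that the \emph{complete} wLD configuration and the \emph{complete} sPP configuration are isomorphic incidence structures: each has ten points and ten lines, with exactly two points lying on four lines and exactly two lines carrying four points. But the two theorems split this common configuration into hypothesis and conclusion in inequivalent ways. In the wLD statement (Theorem \ref{thm:wLD}) the concluding incidence is ``$Z\in XY$,'' and since $S,X,Y$ are collinear by hypothesis, the line $XY$ is one of the two four-point lines of the completed figure ($S,X,Y,Z$), while $Z$ is the intersection of the two four-point lines. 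In the sPP statement (Theorem \ref{thm:sPP}) the concluding incidence is the collinearity of $M,C'',A''$, and the line $MC''A''$ is a three-point line of the completed figure (the four-point lines there are $ABCM$ and $A'B'C'M$). Any isomorphism of incidence structures must send four-point lines to four-point lines, so no relabelling of the ten points can carry the hypotheses of one theorem onto the hypotheses of the other \emph{and} the conclusion onto the conclusion. Your plan to ``verify, incidence by incidence'' that the two statements are the same theorem in different names must therefore fail at the last step: the incidence you need as the sPP conclusion is already part of the construction on the wLD side (it is $Z\in B'C'$, which holds by the definition of $Z$), and the incidence you need as the wLD conclusion is already part of the hypothesis on the sPP side ($M$ lies on the two base lines by definition). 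What remains to be proved in each direction is precisely the incidence your dictionary cannot reach.

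For comparison, the paper's proof does something essentially different: to get sPP from wLD it does not relabel but extracts from the Pappus picture a \emph{new} pair of perspective triangles, $C''B'A''$ and $A'SC'$, perspective from the point $B$, checks that the wLD hypotheses hold for this pair (using $B'A''\cap SC'=C$, $B'C''\cap SA'=A$ and the two given collinear triples), and reads off the desired collinearity of $M,C'',A''$ from the wLD conclusion for those triangles. Conversely, inside a wLD configuration it finds two collinear triples ($T,S,R$ and $C',B',A'$ in its notation) that are perspective from the auxiliary point $D$, and applies sPP with $D$ as the perspectivity center to recover $Z\in XY$. If you want to salvage your approach, the isomorphism of the completed configurations can serve as motivation and as a guide for choosing these auxiliary triangles, but the actual argument has to be of this ``apply the other theorem to a different triangle pair with a different center'' type; a pure translation of names, however carefully done and however one handles degeneracies, cannot establish either implication.
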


\begin{figure}[ht]
  \centering
    \includegraphics[scale=1]{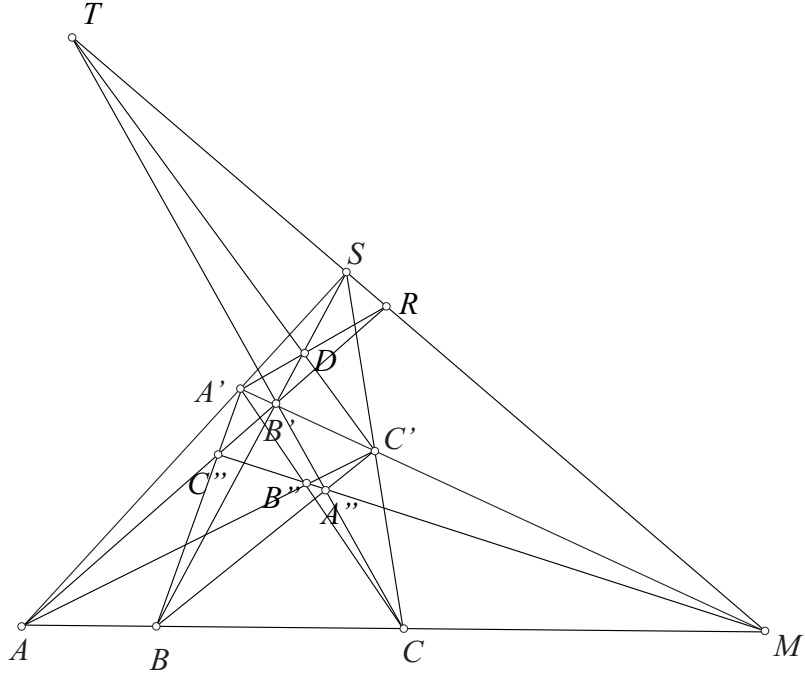}\\
  \caption{Configuration for proofs}\label{fig:wLDeqsPP}
\end{figure}

\begin{proof}
Considering the notations in the figure \ref{fig:wLDeqsPP}, we see that $C''B'A''\overset{B}{\sim} A'SC'$ perspective is satisfied. Since $B'A''\cap SC'=C$, $B'C''\cap SA'=A$, $A,B,C$ and $A',B',C'$ are collinear , we can apply the wLD theorem and obtain that $A'C' \cap A''C''$, $A$ and $C$ are also collinear. It follows that the assertion of the spP theorem is fulfilled.

Conversely, suppose that the sPP theorem holds on the plane. Let $A'DC'$ and $AB'C$ be two given triangles with perspective $A'DC'\overset{S}{\sim} AB'C$, on which the points $A',B',C'$ is collinear, and $R:=AB'\cap A'D$, $T=C'D\cap CB'$ and $S$ are also collinear. Then $TSR \overset{D}{\sim} C'B'A'$, therefore the collinear triples $T,S,R$ and $C',B',A'$ are in perspective position and are satisfied , that $TB'\cap C'S =C$ and $B'R\cap SA'=A$. It follows from the sPP theorem that the line $AC$ contains the intersection of the lines $TR$ and $A'C'$, which means that the points $R,S,T$ and $AC\cap A'C'= M$ is collinear. That is, the statement of the wLD theorem is true for the examined triangles.
\end{proof}

There is another specialization of Pappus' theorem, whose extra condition is analogous to the condition of the LD theorem, which we call \emph{Little-Pappus theorem} rather than "Specialized Pappus theorem Two", as \cite{stevenson} .

\begin{theorem}[LP theorem]\label{thm:LP}
Assume that $A,B,C$ are three points of a line and $A',B',C'$ are three points of another line whose point of intersection is $M$. Assume that the points $C''=AB'\cap A'B$, $A''=BC'\cap B'C$ and $M$ are collinear. Then $C''=AB'\cap A'B$, $B''=AC'\cap A'c$ and $A''=BC'\cap B'C$ are collinear, too.
\end{theorem}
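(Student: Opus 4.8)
The statement to establish is that the LP configuration is valid on every plane of incidence on which the wLD theorem holds (equivalently, by Proposition~\ref{thm:wLDequivalenttosPP}, the sPP theorem) --- this is the implication $\mathrm{wLD}\Rightarrow\mathrm{LP}$ of Theorem~\ref{thm:main}. Write $\ell$ for the line $ABC$ and $\ell'$ for the line $A'B'C'$, so that $M=\ell\cap\ell'$, and let $p$ be the line through the three given collinear points $M$, $C''=AB'\cap A'B$ and $A''=BC'\cap B'C$. Since $A''$ and $C''$ already lie on $p$, it suffices to show that $B''=AC'\cap A'C$ lies on $p$ as well.

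The plan is a Hessenberg-type reduction in two moves. First I would introduce the auxiliary point $S:=AA'\cap BB'$. The hypotheses then read precisely: $A,B,C$ and $A',B',C'$ lie on two lines meeting at $M$, the pairs $\{A,A'\}$ and $\{B,B'\}$ are joined by lines through $S$, and $M,C'',A''$ are collinear. This is exactly the setting for a converse of the sPP theorem --- the sPP-counterpart of Theorem~\ref{thm:cwLD} --- which forces the remaining pair $\{C,C'\}$ also to be joined by a line through $S$; hence $(A,B,C)\overset{S}{\sim}(A',B',C')$. Such a converse is available whenever sPP holds, by the same auxiliary-point device as in the proof of Theorem~\ref{thm:cwLD}: put $C^{\star}:=SC\cap\ell'$ and $A^{\star}:=BC^{\star}\cap B'C$; the forward sPP theorem applied to $(A,B,C)\overset{S}{\sim}(A',B',C^{\star})$ puts $M$, $C''$, $A^{\star}$ on a line, so $A^{\star}$ and $A''$ both lie on $p\cap B'C$, whence $A^{\star}=A''$; tracing this back along $B'C$ and then along $\ell'$ gives $C^{\star}=C'$, i.e.\ $S\in CC'$.

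Second, with perspectivity in hand, I would invoke the sPP theorem once more, now with $B,B'$ in the roles of the first pair and $A,A'$ in the roles of the middle pair: the triples $(B,A,C)$ on $\ell$ and $(B',A',C')$ on $\ell'$ are still perspective from $S$ and still lie on two lines through $M$. Its conclusion is that $M$, $BA'\cap B'A$ and $AC'\cap A'C$ are collinear; but $BA'\cap B'A=A'B\cap AB'=C''$ and $AC'\cap A'C=B''$, so $M,C'',B''$ are collinear, i.e.\ $B''\in p$. Hence $A''$, $B''$, $C''$ all lie on $p$, which is the assertion of the LP theorem.

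The step I expect to be the main obstacle is the first one: arranging the converse of sPP so that the labels match and its conclusion is exactly $S\in CC'$ (the middle point $B$ of the collinearity $M,C'',A''$ being precisely the one distinguished in the conclusion of sPP, no re-ordering should be needed there), and checking that sPP genuinely yields this converse through the construction above and not something stronger. After that, as in the proofs of Propositions~\ref{thm:HDimpliesLD} and~\ref{thm:wLDequivalenttosPP}, a short list of degenerate positions has to be split off and checked by hand --- e.g.\ $A,A',B,B'$ collinear (so $S$ is undefined), $M$ equal to one of $C'',A'',B''$, $A''$ lying on $\ell'$ or $B''$ on $\ell$, or two of $\ell,\ell',p$ coinciding --- but each of these is routine.
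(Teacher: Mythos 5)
Your proposal is correct (up to the routine degenerate positions you explicitly set aside, which the paper also leaves implicit), and it shares the paper's overall skeleton --- first upgrade the LP data to full perspectivity of the two triples from a point $S$, then finish with the perspective--Pappus statement --- but the middle machinery is genuinely different. The paper takes $S=AA'\cap CC'$, constructs auxiliary points $R=AB'\cap SM$, $D=B'S\cap A'R$, $T=C'D\cap SM$, applies the wLD theorem to the triangles $AB'C$ and $A'DC'$ to obtain that $C,B',T$ are collinear, and then applies the cwLD theorem (Theorem~\ref{thm:cwLD}) to the triangles $SAC$ and $BC''A''$ to force $B'$ onto the line $SB$; only then does it invoke the pP consequence of the sPP theorem. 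You instead take $S=AA'\cap BB'$ and compress the whole middle portion into one new lemma, a converse of sPP, which you prove by transplanting the $C^{\star}$-device from the proof of Theorem~\ref{thm:cwLD} (your identification $A^{\star}=A''$ and the trace-back $C^{\star}=C'$ are sound, needing only $p\neq B'C$ and $BA''\neq \ell'$, both degenerate exclusions); your finish is a second, relabelled application of sPP, legitimate because the conclusion of sPP concerns the two cross-joins involving the middle pair, which yields $M,C'',B''$ collinear directly. What each approach buys: your route is leaner --- no auxiliary points $R,D,T$, no separate wLD step, and it even shows that the Pappus line of the conclusion passes through $M$, slightly more than the LP statement asks --- while the paper's route stays closer to the wLD/cwLD configurations before crossing to the Pappus side at the very end. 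Your argument does rest on the equivalence of Proposition~\ref{thm:wLDequivalenttosPP} from the outset, so it is not a deduction from wLD alone; but since the paper's own proof likewise uses cwLD and sPP/pP, this is consistent with the paper's logical ordering, and your use of these prerequisites is properly flagged.
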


As shown in Exercise 4 on page 212 of \cite{stevenson}, this theorem follows from Desargues' theorem. Now we prove a little more:

\begin{proposition}\label{thm:wLDequivalenttoLP}
The weak Little-Desargues  theorem implies the Little-Pappus theorem.
\end{proposition}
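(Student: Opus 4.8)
The plan is to mimic the structure of the proof of Proposition~\ref{thm:wLDequivalenttosPP}: I will realize the Little-Pappus configuration as a perspective pair of triangles to which the weak Little-Desargues theorem (or better, its already-established equivalent, the sPP theorem) can be applied. Concretely, given the two point-triples $A,B,C$ on one line $\ell$ and $A',B',C'$ on another line $\ell'$ with $M=\ell\cap\ell'$, and given the extra incidence that $C''=AB'\cap A'B$, $A''=BC'\cap B'C$ and $M$ are collinear, I want to produce the missing point $B''=AC'\cap A'C$ on the line $C''A''M$. The key observation is that the three ``cross'' points $A'',B'',C''$ together with $M$ are the intersection points of the sides of the two triangles $AB C$ (degenerate, since collinear --- so instead I use triangles whose vertices are among $A,B,C,A',B',C',S$), so I should first introduce the auxiliary point $S:=AA'\cap BB'$ (if $AA'\parallel BB'$ in the affine picture one passes to the projective completion, where $S$ is well-defined by Axiom~P2). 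Then $ABC\overset{S}{\sim}A'B'C$ need not hold, but a suitable relabelling gives perspective triangles whose axis-of-perspectivity incidences are exactly the Little-Pappus data.

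\textbf{Key steps.}
First I would set $S:=AA'\cap BB'$ and $C^{\star}:=SC'\cap \ell$ (note $C^\star$ lies on $\ell$ by construction, and $S,C',C^\star$ are collinear by construction), so that the triangles built on $\{A,B,C^\star\}$ and $\{A',B',C'\}$ are in perspective from $S$. Second, I would identify the side-intersections of these triangles: $AB'\cap A'B=C''$ (given), $AB\cap A'B'$ lies on $\ell$ and $\ell'$ hence equals $M$, and $B C^\star \cap B'C'$, $A C^\star\cap A'C'$ are the two remaining crossing points. The hypothesis ``$C'',A'',M$ collinear'' should, after checking that $A''$ coincides with $BC^\star\cap B'C'$ (this uses $S,C,C'$ collinear to force $C^\star$ onto line $CC'$, hence $C^\star=C$ when $C\in\ell$ --- here one must be slightly careful about whether the Little-Pappus hypotheses already place $C$ on line $SC'$ or not), become precisely the hypothesis of the wLD theorem (``$S,X,Y$ and one vertex-triple collinear''). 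Third, applying the wLD theorem (equivalently sPP) to this perspective pair yields that the third crossing point $AC^\star\cap A'C'$ lies on line $C''A''$, and then unwinding $C^\star=C$ gives $B''=AC'\cap A'C\in C''A''M$, which is the Little-Pappus conclusion. Finally I would handle the degenerate sub-cases (when $S$ is undefined because $AA'=BB'$, when some of the triangles degenerate, when $C=C^\star$ fails, or when all seven points turn out collinear) exactly as in Proposition~\ref{thm:HDimpliesLD}, by a short separate argument or by observing the statement holds trivially.

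\textbf{Main obstacle.}
The delicate point is the bookkeeping of \emph{which} perspective pair of triangles to form: the Little-Pappus configuration has the two triples on two lines meeting at $M$, but no a priori ``center'' $S$, whereas wLD requires a center of perspectivity and a specific collinear vertex-triple lying on it. Producing $S$ and the auxiliary point $C^\star$ so that (i) the three triangle-sides through $A$, resp.\ $B$, resp.\ $C^\star$ meet the opposite sides in exactly $C''$, $A''$, and $M$, and (ii) the wLD ``extra collinearity'' hypothesis matches the Little-Pappus ``$C'',A'',M$ collinear'' hypothesis, requires choosing the labelling carefully and verifying that $C^\star$ really lands on line $CC'$ (so that the conclusion transfers back to $B''=AC'\cap A'C$). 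I expect this matching of the two configurations --- and in particular ruling out that the needed incidences are vacuous or that the constructed triangles are degenerate --- to be the main place where care is needed; once the configurations are aligned, the implication is a one-line application of the wLD theorem, just as in the proof of Proposition~\ref{thm:wLDequivalenttosPP}.
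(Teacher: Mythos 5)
There is a genuine gap, and it sits exactly at the point you flag as ``one must be slightly careful'': the Little-Pappus hypotheses do \emph{not} place $C$ on the line $SC'$, and establishing that they do --- i.e.\ that the three lines $AA'$, $BB'$, $CC'$ are concurrent --- is the whole content of the proposition, not a bookkeeping detail. Your plan derives the identification $C^\star=C$ ``using $S,C,C'$ collinear'', which is circular, since that collinearity is precisely what has to be proved from the hypothesis that $M$, $C''$, $A''$ are collinear. There are also two concrete slips in the configuration matching: the ``side-intersections'' you list for the triples $\{A,B,C^\star\}$ and $\{A',B',C'\}$, namely $AB\cap A'B'$, $BC^\star\cap B'C'$, $AC^\star\cap A'C'$, are all equal to $M$ (each triple is collinear, so every such ``side'' is $\ell$ or $\ell'$), so the intended application of wLD/sPP as you describe it is vacuous; and in particular the claimed conclusion ``$AC^\star\cap A'C'$ lies on $C''A''$'' is the trivial statement $M\in C''A''$ and does not produce $B''=AC'\cap A'C$ even after setting $C^\star=C$. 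To get $B''$ on the line one must in any case apply the sPP theorem a second time with permuted labels (this is how sPP yields the pP theorem), and for that one again needs the concurrency of $AA'$, $BB'$, $CC'$.

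Your construction can be repaired, but the missing argument has to be supplied explicitly. One route in the spirit of your setup: apply Theorem \ref{thm:sPP} (equivalent to wLD by Proposition \ref{thm:wLDequivalenttosPP}) to the triples $(A,B,C^\star)$ and $(A',B',C')$, which \emph{are} perspective from $S$ by construction, to get that $M$, $C''$ and $A^\star:=BC'\cap B'C^\star$ are collinear; comparing with the hypothesis that $M$, $C''$, $A''$ are collinear forces $A^\star=A''$ (both lie on $MC''$ and on $BC'$), hence $B'C^\star=B'C$ and so $C^\star=C$, i.e.\ the concurrency --- after which pP finishes, with the usual non-degeneracy caveats. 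The paper proceeds differently: it sets $S=AA'\cap CC'$ and obtains the concurrency by applying the \emph{converse} weak Little-Desargues theorem (Theorem \ref{thm:cwLD}, itself derived from wLD) to the triangles $SAC$ and $BC''A''$, whose corresponding sides meet in $A'$, $C'$, $M$ on $\ell'$ and whose cross-joins meet in $B'\in\ell'$, concluding that $B'$ lies on $SB$; only then does it invoke the perspective Pappus theorem (a consequence of sPP) to get $A''$, $B''$, $C''$ collinear. Either way, the step from ``$M,C'',A''$ collinear'' to ``$AA'$, $BB'$, $CC'$ concurrent'' is a substantive application of wLD (or cwLD) that your proposal currently assumes rather than proves.
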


\begin{proof}
To prove the statement, consider the figure \ref{fig:wLDeqsPP} and assume that $A'',C''$ and $M$ are collinear. Let $S=AA'\cap CC'$, $R=AB'\cap SM$, $D=B'S\cap A'R$, and $T=C'D\cap SM$. Then $AB'C\overset{S}{\sim} A'DC'$ holds, where $AB'\cap A'D=R$, $A'C'\cap AC=M$. Now $A',B',C'$; Point triples $R,S,M$ are collinear, therefore, according to the wLD theorem, $DC'\cap CB'$ lies on the line $SM$, which means that $C,B',T$ is collinear. Now consider the triangles $SAC$ and $BC''A''$, with their vertices $SA\cap BC''=A'$, $SC\cap BA''=C'$ $AC\cap C''A''=M$ is triple collinear and $AC''\cap CA''=B'$ is on their line. Since $B,A,C$ are also collinear, we get from the cwLD theorem that $B'$ is a point on the $SB$ line. This means that in the case of $A,B,C$ and $A',B',C'$ the assumption of the pP theorem holds, which is a consequence of the spP theorem, therefore $A'',B''$ and $C''$ are also collinear.
\end{proof}


\begin{thebibliography}{99}

\bibitem{al-dhahir} Al-Dhahir, M.W., Ghalieh, K.: On minor forms of Desargues and Pappus {\em Geometriae Dedicata} {\bf 42} (1992) 329--344.

\bibitem{baer} Baer, R.: Homogeneity of Projective Planes {\em American J. Math.} {\bf 64} (1942), 137–152.

\bibitem{burn} Burn, R.P.: Bol Quasi-Fields and Pappus' Theorem {\em Math. Zeitschr.} {\bf 105}, (1968) 351--364.

\bibitem{dembowski} Dembowski, P.: Finite Geometries, Springer, 1968.

\bibitem{gho} G.Horv\'ath, \'A.: Theorem of Gallucci revisited. {\em Journal of Geometry and Graphics} {\bf 23/2} (2019) 167--178.

\bibitem{grari} Grari, A.: A necessary and sufficient condition so that two planar ternary rings induce isomorphic projective planes. {\em Arch. Math.} {\bf 83} (2004) 183--192.

\bibitem{hall-1943} Hall, M.: Projective planes {\em Transactions of the American Mathematical Society} {\bf 53} (1943), 229--277.

\bibitem{hall-1959} Hall, M.: The Theory of Groups, 434 pp., Macmillan, 1959.

\bibitem{hessenberg} Hessenberg, G.,(1905), Beweis des Desarguesschen Satzes aus dem Pascalschen, {\em Mathematische Annalen} {\bf 61/2} (1905), 161--172.

\bibitem{hughes} Hughes, D.R., Piper, F.C.: Projective Planes, Springer, New York, 1973.

\bibitem{marchisotto} Marchisotto, E. A.: The Theorem of Pappus: A Bridge between Algebra and Geometry {\em The American Mathematical Monthly} {\bf 109/6} (2002) 497--516.

\bibitem{moufang} Moufang, R.: Alternativkörper und der Satz vom vollständigen Vierseit {\em  Abh.Math. Sem. Hamburg} {\bf 9} (1933), 207--222.

\bibitem{moulton} Moulton, F.R., A simple non-Desarguesian plane geometry {\em Transactions of the American Mathematical Society} {\bf 3/2}, (1902) 192--195.

\bibitem{pickert} Pickert, G.: Projektive Ebenen, Springer-Verlag, Berlin, Heidelberg, New York, 1976.

\bibitem{seidenberg} Seidenberg, A.: Lectures in projective geometry, Princeton, 1962.

\bibitem{skornyakov} Skornyakov, L.A.: Projective planes, Amer. Math. Soc. Transl. {\bf 99} 1953.

\bibitem{spencer} Spencer J.C.D: On the Lenz-Barlotti classification of projective planes {\em Quart. J. Math.} {\bf 3/11} (1960), 341--357.

\bibitem{stevenson} Stevenson F. W.: Projective Planes, W. H. Freeman and Company, 1972.

\bibitem{veblen-young} Veblen, O., Young, J. W., Projective geometry, I., Blaisdell Publishing Company, 1910.

\bibitem{veblen-wedderburn} Veblen, O., Wedderburn, J.: Non-Desarguesian and non-Pascalian geometries, {\em Trans. AMS} {\bf 8} (1907), 379--388.

\bibitem{weibel} Weibel, Ch., Survey of Non-Desarguesian Planes, {\em Notices of the American Math. Soc.} {\bf 54/10}, (2007), 1294--1303.

\end{thebibliography}
\end{document}